\documentclass[a4paper, reqno]{amsart}
\pdfoutput=1
\usepackage[english]{babel}
\usepackage{amssymb, upref}
\usepackage{enumerate}
\usepackage[colorlinks=true]{hyperref}
\hypersetup{urlcolor=blue, citecolor=red}

\title[Local uniform convergence and eventual positivity]{Local uniform convergence and eventual positivity of solutions to biharmonic heat equations}

\author[D. Daners]{Daniel Daners}
\address{Daniel Daners, School of Mathematics and Statistics, University of Sydney, NSW 2006, Australia} \email{daniel.daners@sydney.edu.au}

\author[J. Gl\"{u}ck]{Jochen Gl\"{u}ck}
\address{Jochen Gl\"{u}ck, University of Passau, Innstra{\ss}e 41, D-94032, Passau, Germany}
\email{Jochen.Glueck@uni-passau.de}

\author[J. Mui]{Jonathan Mui}
\address{Jonathan Mui, School of Mathematics and Statistics, University of Sydney, NSW 2006, Australia}
\email{jonathan.mui@sydney.edu.au}

\subjclass[2010]{Primary: 35G10, 35K30, Secondary: 35B40}
\keywords{Asymptotic behaviour, higher-order parabolic equation, eventual positivity}

\date{November 4, 2021}

\numberwithin{equation}{section}

\theoremstyle{plain}
\newtheorem{theorem}{Theorem}[section]
\newtheorem{proposition}[theorem]{Proposition}
\newtheorem{lemma}[theorem]{Lemma}
\newtheorem{corollary}[theorem]{Corollary}

\theoremstyle{definition}
\newtheorem{definition}[theorem]{Definition}

\newtheorem*{notation}{Notation}

\theoremstyle{remark}
\newtheorem{remark}[theorem]{Remark}
\newtheorem{assumption}[theorem]{Assumption}

\DeclareMathOperator{\sgn}{sign}
\DeclareMathOperator{\codim}{codim}

\newcommand{\mfra}{\mathfrak{a}}
\newcommand{\mfrb}{\mathfrak{b}}
\newcommand{\mfrq}{\mathfrak{q}}

\newcommand{\NN}{\mathbb{N}}

\newcommand{\RR}{\mathbb{R}}
\newcommand{\CC}{\mathbb{C}}

\begin{document}
\begin{abstract}
  We study the evolution equation associated with the biharmonic operator on infinite cylinders with bounded smooth cross-section subject to Dirichlet boundary conditions. The focus is on the asymptotic behaviour and positivity properties of the solutions for large times. In particular, we derive the local eventual positivity of solutions. We furthermore prove the local eventual positivity of solutions to the biharmonic heat equation and its generalisations on Euclidean space. The main tools in our analysis are the Fourier transform and spectral methods.
\end{abstract}

\maketitle

\section{Introduction}

In this paper, we study solutions to the biharmonic heat equation on Euclidean space $\RR^N$ and on infinite cylinders. We are interested in the asymptotic behaviour and positivity properties of solutions, a subtle matter which has only been brought to light relatively recently. In contrast to the well-known positivity-preserving property of the second-order heat equation, the biharmonic heat equation is not positivity preserving. However, not all is lost and some weaker positivity property persists. Gazzola and Grunau showed in~\cite{GG-lep} that solutions to the biharmonic heat equation on $\RR^N$ display \emph{local eventual positivity}. More precisely, they showed that for every continuous, non-trivial, compactly supported initial function $u_0$ satisfying $u_0(x) \geq 0$ and every compact subset $K$ of $\RR^N$, the corresponding solution $u(t,x)$ is positive on $K$ after finite time. This property was analysed in greater detail by the same authors together with Ferrero in~\cite{FGG} and generalised to fourth-order semilinear equations. Quite recently, L.~Ferreira and V.~Ferreira showed in~\cite{FF-lep} that local eventual positivity is a feature of solutions to \emph{polyharmonic} evolution equations. It therefore appears that local eventual positivity is a natural property to study in connection with higher-order evolution equations.

We begin by stating the two main results, first for the whole space $\RR^N$ (Theorem~\ref{thm:BHE-Rn}) and then for infinite cylinders (Theorem~\ref{thm:BHE-cyl}). This will be followed by a brief overview of eventual positivity.

\subsection{Main results}
Let $\alpha > 0$ and consider the evolution equation
\begin{equation}
  \label{eq:BHE-Rn}
  \begin{aligned}
    \frac{\partial u}{\partial t} + (-\Delta)^{\alpha} u & = 0
                                                         &          & \text{on } (0, \infty) \times \RR^N \\
    u(0, x)                                              & = u_0(x)
                                                         &          & \text{on } \RR^N
  \end{aligned}
\end{equation}
on $\RR^N$ with initial data satisfying
\begin{equation}
  \label{eq:BHE1-u0}
  u_0 \in L^1(\RR^N) \cap L^2(\RR^N).
\end{equation}
The case of $\alpha=1$ corresponds to the classical heat equation and $\alpha=2$ to the biharmonic equation. For $\alpha \in (0, 1)$, the operator $(-\Delta)^\alpha$ is the non-local \emph{fractional Laplacian}. In our main theorem, we derive the asymptotic behaviour of localised, rescaled solutions.

\begin{theorem}
  \label{thm:BHE-Rn}
  Suppose that $\alpha > 0$ and let $u$ be the solution to the evolution equation~\eqref{eq:BHE-Rn} with initial datum~\eqref{eq:BHE1-u0}. For every $t > 0$ define
  \begin{equation}
    \label{eq:PHE-Rn-ct}
    c_t := \frac{(2\pi)^N}{M_\alpha}t^{N/2\alpha} \qquad \text{where } M_\alpha:= \int_{\RR^N} e^{-|s|^{2\alpha}} \,ds.
  \end{equation}
  Then for any compact set $K \subseteq \RR^N$ we have
  \begin{equation}
    \label{eq:PHE-Rn-blowup-limit}
    \lim_{t\to\infty}c_t u(t,x)
    =\int_{\RR^N} u_0(y)\,dy
  \end{equation}
  uniformly with respect to $x\in K$.
\end{theorem}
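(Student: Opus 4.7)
The plan is to work in Fourier variables. With the convention $\hat{f}(\xi) = \int_{\RR^N} f(x)\,e^{-ix\cdot\xi}\,dx$, the semigroup generated by $(-\Delta)^\alpha$ acts as multiplication by $e^{-t|\xi|^{2\alpha}}$, so the solution of \eqref{eq:BHE-Rn} satisfies $\hat{u}(t,\xi) = e^{-t|\xi|^{2\alpha}}\hat{u}_0(\xi)$. Since $u_0 \in L^1(\RR^N)$, the function $\hat{u}_0$ is bounded and continuous with $\hat{u}_0(0) = \int_{\RR^N} u_0$. Because $M_\alpha < \infty$ for every $\alpha > 0$, the function $e^{-t|\xi|^{2\alpha}}\hat{u}_0(\xi)$ lies in $L^1(\RR^N)$ for each fixed $t>0$, so Fourier inversion is valid pointwise and gives
\[
c_t u(t,x) = \frac{t^{N/(2\alpha)}}{M_\alpha} \int_{\RR^N} e^{ix\cdot\xi}\, e^{-t|\xi|^{2\alpha}}\, \hat{u}_0(\xi)\, d\xi .
\]

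The decisive step is the parabolic rescaling $\eta = t^{1/(2\alpha)}\xi$. The Jacobian cancels the prefactor $t^{N/(2\alpha)}$ exactly, yielding
\[
c_t u(t,x) = \frac{1}{M_\alpha}\int_{\RR^N} e^{i t^{-1/(2\alpha)} x\cdot\eta}\, \hat{u}_0\bigl(t^{-1/(2\alpha)}\eta\bigr)\, e^{-|\eta|^{2\alpha}}\, d\eta.
\]
The factor $1/M_\alpha$ now matches the normalisation of the target integral exactly, and it remains to justify passing the limit $t\to\infty$ inside the integral and uniformly in $x\in K$.

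For pointwise convergence in $\eta$, as $t\to\infty$ one has $e^{i t^{-1/(2\alpha)} x\cdot\eta}\to 1$ trivially, and $\hat{u}_0(t^{-1/(2\alpha)}\eta)\to \hat{u}_0(0)$ by continuity of $\hat{u}_0$ at the origin. The integrand is dominated uniformly in $x$ and $t$ by the integrable majorant $\|\hat{u}_0\|_{\infty}\, e^{-|\eta|^{2\alpha}}$, so by dominated convergence the integral tends to $\hat{u}_0(0)\cdot\frac{1}{M_\alpha}\int_{\RR^N} e^{-|\eta|^{2\alpha}}\, d\eta = \int_{\RR^N} u_0$.

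To upgrade the convergence to uniformity over a compact set $K$, I would split the $\eta$-integral at a cut-off radius $R$. The tail $|\eta|>R$ is controlled by $2\|\hat{u}_0\|_\infty \int_{|\eta|>R} e^{-|\eta|^{2\alpha}}\,d\eta$, independently of $x$ and $t$, and can be made arbitrarily small by choosing $R$ large. On $\{|\eta|\le R\}$ the two pointwise limits become uniform: $|x\cdot t^{-1/(2\alpha)}\eta|\le t^{-1/(2\alpha)} R \sup_{x\in K}|x|\to 0$, and $\hat{u}_0$ is uniformly continuous on a neighbourhood of $0$. The only delicate point in the argument is this last uniformity bookkeeping; the rest is a direct Fourier computation with a scaling substitution.
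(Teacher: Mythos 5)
Your proof is correct and rests on the same two ingredients as the paper's argument: Fourier transforming the equation to obtain $\hat u(t,\xi) = e^{-t|\xi|^{2\alpha}}\hat u_0(\xi)$, and exploiting the fact that as $t\to\infty$ the decaying kernel concentrates near the origin where $\hat u_0$ is continuous with value $\int u_0$. Where you differ is in the packaging. The paper leaves the kernel unscaled, interprets
\begin{equation*}
  c_t u(t,x) = \bigl(\check\varphi_t * f(\cdot,x)\bigr)(0),
  \qquad f(\omega,x)=(2\pi)^{N/2}\hat u_0(\omega)e^{i\omega\cdot x},
\end{equation*}
as a convolution against the approximate identity $\varphi_t(\omega)=t^{N/2\alpha}\varphi_1(t^{1/2\alpha}\omega)$, and invokes a general parametrised approximate-identity lemma (Lemma~\ref{lem:approx-id}); this lemma is precisely the $\delta$-splitting you carry out by hand, and it is formulated abstractly because the same lemma is reused in the harder cylinder proof of Theorem~\ref{thm:BHE-cyl}. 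You instead perform the parabolic rescaling $\eta = t^{1/(2\alpha)}\xi$ first, which freezes the kernel to the fixed weight $e^{-|\eta|^{2\alpha}}/M_\alpha$ and transfers the $t$-dependence into the arguments $e^{it^{-1/(2\alpha)}x\cdot\eta}$ and $\hat u_0(t^{-1/(2\alpha)}\eta)$, after which the limit is a direct dominated-convergence computation, upgraded to uniformity over $K$ by cutting off at $|\eta|\le R$. This is a slightly more self-contained and concrete presentation, at the cost of not yielding a reusable lemma. Your uniformity step is handled correctly: the phase estimate $|t^{-1/(2\alpha)}x\cdot\eta|\le t^{-1/(2\alpha)}R\sup_{x\in K}|x|$ and the uniform continuity of $\hat u_0$ near $0$ control the bulk, and the uniform-in-$(t,x)$ bound $\|\hat u_0\|_\infty e^{-|\eta|^{2\alpha}}$ controls the tail. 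One very small presentational point: to justify pointwise Fourier inversion it is worth noting explicitly that $\hat u_0\in L^\infty$ (since $u_0\in L^1$) together with $e^{-t|\cdot|^{2\alpha}}\in L^1$ makes the integrand $L^1$, which you do implicitly via $M_\alpha<\infty$; the paper records this observation as equation~\eqref{eq:PHE1-u0-hat}.
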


Intuitively, the numbers $c_t$ are rescaling factors which counteract the decay of the solution. Observe that the spectral bound of $(-\Delta)^{\alpha}$ is zero. It is part of the continuous spectrum and the constant function is intuitively an eigenfunction, but it does not lie in the space $L^2(\RR^N)$. The theorem tells us that the limit~\eqref{eq:PHE-Rn-blowup-limit} is essentially a projection of the initial condition $u_0$ onto the corresponding `eigenspace'.

In the special case of the biharmonic equation ($\alpha=2$), our result is of the same flavour as~\cite[Theorem 1.1]{FGG}. In particular, we show that the asymptotic profile of this blown-up solution is locally that of a constant function. In Section~\ref{sec:BHE-Rn}, we prove Theorem~\ref{thm:BHE-Rn} and also note that the method allows to cover slightly more general evolution equations associated with linear differential operators with constant coefficients.

In Section~\ref{sec:cyl}, we consider a result similar to that in Theorem~\ref{thm:BHE-Rn} for the biharmonic equation on infinite cylinders of the form $\RR \times \Omega$, where $\Omega \subseteq \RR^N$ is a bounded smooth domain satisfying a spectral condition. We derive the asymptotic behaviour of solutions $u = u(t,x,y)$ to the problem
\begin{equation}
  \label{eq:BHE-cyl}
  \begin{aligned}
    \frac{\partial u}{\partial t} + (-\Delta)^2 u            & = 0   &  & \text{on } (0, \infty) \times \RR \times \Omega                \\
    u(0, \cdot, \cdot)                                       & = u_0 &  & \text{on } \RR \times \Omega                                   \\
    u(t,x,\cdot) =\frac{\partial u}{\partial \nu}(t,x,\cdot) & = 0   &  & \text{on } \partial\Omega \text{ for all } x \in \RR, t \geq 0
  \end{aligned}
\end{equation}
where $\nu(y)$ is the outer unit normal to $\partial\Omega$, and the initial datum satisfies
\begin{equation}
  \label{eq:BHE-cyl-u0}
  u_0 \in L^1(\RR, L^2(\Omega)) \cap L^2 (\RR, L^2(\Omega)).
\end{equation}
This means that $u_0(x, \cdot) \in L^2(\Omega)$ for every $x \in \RR$, and that
\begin{equation*}
  \int_\RR \| u_0(x,\cdot) \|_{L^2(\Omega)} \,dx < \infty \quad \text{and} \quad \int_\RR \| u_0(x,\cdot) \|^2_{L^2(\Omega)} \,dx < \infty.
\end{equation*}
The boundary conditions in \eqref{eq:BHE-cyl} are fourth-order homogeneous Dirichlet boundary conditions and, due to their physical interpretation as models of clamped plates, are often called \emph{clamped boundary conditions}, see for instance~\cite[Section 1.1.2]{GGS}. We adopt this terminology as well.

We will make use of properties of the eigenvalue problem
\begin{equation}
  \label{eq:BH-eig}
  \begin{aligned}
    \Delta^2 \phi & = \lambda \phi                           &  & \text{in } \Omega         \\
    \phi          & = \frac{\partial \phi}{\partial \nu} = 0 &  & \text{on } \partial\Omega
  \end{aligned}
\end{equation}
Since the above problem is self-adjoint and has compact resolvent, the spectrum consists of a sequence of eigenvalues of finite algebraic multiplicity bounded from below and going to infinity. We call the lowest eigenvalue the \emph{principal eigenvalue}, and the corresponding eigenfunction a \emph{principal eigenfunction} if the principal eigenvalue is geometrically simple. This is the case in second order problems. However, we note that the principal eigenvalue of the Dirichlet biharmonic operator need not be geometrically simple. Some explicit examples are given in~\cite[Section 3]{SS20}.

For our result on infinite cylinders requires the following spectral condition.
\begin{assumption}
  \label{simple}
  Let $\Omega \subseteq \RR^N$ be a bounded domain with $C^\infty$ boundary. We assume that the biharmonic eigenvalue problem \eqref{eq:BH-eig} has an algebraically simple principal eigenvalue.
\end{assumption}

Assumption~\ref{simple} is not too restrictive, as it has been shown by Ortega and Zuazua~\cite{OZ} that the spectrum is \emph{generically simple}. Briefly stated, for any smooth domain $\Omega$, there exists an arbitrarily small domain perturbation such that all eigenvalues of the operator on the perturbed domain are simple. Here is our main convergence result on cylinders.

\begin{theorem}
  \label{thm:BHE-cyl}
  Let $\Omega$ be a domain satisfying Assumption~\ref{simple}, and consider the solution $u$ to the biharmonic equation~\eqref{eq:BHE-cyl} with initial datum~\eqref{eq:BHE-cyl-u0}. Let $e_1(\cdot)$ be an eigenfunction, normalised in $L^2(\Omega)$, corresponding to the principal eigenvalue of \eqref{eq:BH-eig}. Then there exist numbers $c_t > 0$ such that
  \begin{equation}
    \label{eq:BHE-cyl-blowup-limit}
    \lim_{t\to\infty}c_t u(t,x,y)
    =\int_{\RR}\int_\Omega u_0(\xi,\eta) e_1(\eta) \,d\eta\,d\xi \; e_1(y)
    \qquad \text{as } t \to \infty
  \end{equation}
  uniformly with respect to $(x,y)\in I \times \Omega$, for any compact interval $I$ in $\RR$.
\end{theorem}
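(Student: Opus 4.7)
The strategy is to combine the Fourier transform in the unbounded variable $x$ with the spectral decomposition of the clamped biharmonic operator on the cross-section $\Omega$, then carry out a low-frequency asymptotic analysis. Applying the Fourier transform in $x$ with dual variable $\xi$, the equation~\eqref{eq:BHE-cyl} decouples into a family of abstract Cauchy problems on $L^2(\Omega)$ of the form $\partial_t \hat u(t,\xi,\cdot) = -B(\xi)\hat u(t,\xi,\cdot)$, where $B(\xi) := (-\Delta_y + \xi^2)^2$ with clamped boundary conditions is the self-adjoint, positive operator associated with the form
\[
\mfrb_\xi(\phi,\psi) = \int_\Omega \Delta\phi\,\overline{\Delta\psi}\,dy + 2\xi^2\int_\Omega \nabla\phi\cdot\overline{\nabla\psi}\,dy + \xi^4\int_\Omega \phi\,\overline\psi\,dy
\]
on $H^2_0(\Omega)$. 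Fourier inversion then yields
\[
u(t,x,y) = \frac{1}{2\pi}\int_\RR e^{ix\xi}\bigl(e^{-tB(\xi)} \hat u_0(\xi,\cdot)\bigr)(y)\,d\xi.
\]

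Next I would analyse the family $(B(\xi))_{\xi \in \RR}$ by perturbation theory. Since $B(\xi) - B(0) = -2\xi^2 \Delta_y + \xi^4 \ge 0$ as a form, each eigenvalue $\mu_k(\xi)$ of $B(\xi)$ satisfies $\mu_k(\xi) \ge \lambda_k$ and in particular $\mu_k(\xi) \ge \xi^4$. The family depends polynomially on $\eta = \xi^2$, so is analytic in Kato's sense; combined with Assumption~\ref{simple}, the principal eigenvalue remains algebraically simple on a neighbourhood of $\xi = 0$, with expansion
\[
\mu_1(\xi) = \lambda_1 + a\xi^2 + O(\xi^4),\qquad a := 2\lVert \nabla e_1\rVert^2_{L^2(\Omega)} > 0
\]
obtained from first-order perturbation (using $dB/d\eta|_{\eta=0} = -2\Delta_y$), together with an analytic family of normalised eigenfunctions $e_1(\xi) = e_1 + O(\xi^2)$ in $H^4(\Omega)\cap H^2_0(\Omega)$, hence in $C(\overline\Omega)$ by Sobolev embedding. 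Write $P_1(\xi)$ for the associated rank-one spectral projection.

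I would then split the Fourier integral at a small $\delta > 0$. On $|\xi| \ge \delta$, continuity of $\mu_1$, the strict inequality $\mu_1(\xi) > \lambda_1$ for $\xi \neq 0$, and $\mu_1(\xi) \ge \xi^4$ combine to give a uniform gap $\inf_{|\xi|\ge\delta}\mu_1(\xi) \ge \lambda_1 + \varepsilon$ for some $\varepsilon > 0$, so this contribution is $O(e^{-(\lambda_1+\varepsilon)t})$. On $|\xi| \le \delta$, decompose $e^{-tB(\xi)} = e^{-t\mu_1(\xi)} P_1(\xi) + R(t,\xi)$ with $\lVert R(t,\xi)\rVert_{L^2 \to L^2} \le e^{-t\lambda_2}$ and $\lambda_2 > \lambda_1$. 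The dominant contribution is
\[
\frac{1}{2\pi}\int_{|\xi|\le\delta} e^{ix\xi}\, e^{-t\mu_1(\xi)} \bigl\langle \hat u_0(\xi,\cdot),\, e_1(\xi)\bigr\rangle\, e_1(\xi)(y)\,d\xi.
\]
Substituting $\xi = s/\sqrt{t}$ and invoking dominated convergence (with Gaussian majorant $e^{-as^2}$, continuity of $\hat u_0 \in C_b(\RR;\, L^2(\Omega))$ from $u_0 \in L^1$, and continuity of $\xi \mapsto e_1(\xi)$ in $C(\overline\Omega)$) this asymptotically equals
\[
\frac{e^{-\lambda_1 t}}{2\pi\sqrt{t}}\,\sqrt{\tfrac{\pi}{a}}\,\biggl(\int_\RR\int_\Omega u_0(\xi,\eta)\, e_1(\eta)\,d\eta\,d\xi\biggr)\, e_1(y),
\]
uniformly in $(x,y) \in I \times \overline\Omega$. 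Choosing $c_t := 2\sqrt{\pi a t}\,e^{\lambda_1 t}$ then yields~\eqref{eq:BHE-cyl-blowup-limit}.

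The main obstacle is ensuring uniform convergence across the full cross-section $\overline\Omega$, since the Fourier--spectral decomposition naturally lives in $L^2(\Omega)$ whereas the limit is pointwise. This requires lifting the Kato-analytic dependence of $e_1(\xi)$ from $L^2(\Omega)$ to $H^4(\Omega) \cap H^2_0(\Omega)$ via elliptic regularity for the resolvent of $B(\xi)$, then using Sobolev embedding into $C(\overline\Omega)$; a secondary technical point is verifying that the high-frequency piece and the $(I-P_1)$-orthogonal remainder are $o(e^{-\lambda_1 t}/\sqrt{t})$ in the sup norm on compact $x$-intervals, which follows from the spectral gaps established above together with $\hat u_0 \in L^\infty \cap L^2$.
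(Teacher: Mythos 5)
Your plan follows essentially the same route as the paper: Fourier transform in the unbounded variable, spectral decomposition of the resulting family $-L_\omega$ (your $B(\xi)$) on the cross-section, perturbation theory near $\omega=0$ via Assumption~\ref{simple}, and a scaling argument to identify the limit. The differences are presentational: you use a Taylor expansion $\mu_1(\xi)=\alpha_1+a\xi^2+O(\xi^4)$ and an explicit Gaussian rescaling to get a closed-form $c_t$, whereas the paper defines $c_t:=2\pi\bigl(\int_\RR e^{-t\mu_1}\bigr)^{-1}$ and works with two-sided bounds on $\mu_1$ from the min-max principle, turning the low-frequency analysis into a convolution with an approximate identity (Lemma~\ref{lem:approx-id}, Corollary~\ref{cor:approx-id-mu}). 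Both routes give the same asymptotics; the paper's has the advantage of avoiding the smoothness of $\mu_1$ and only needing monotonicity and the bounds of Lemma~\ref{lem:mu-omegas}(ii).

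There is, however, a genuine gap in what you dismiss as a ``secondary technical point''. You need the $(I-P_1)$-orthogonal remainder to converge to zero \emph{uniformly on $\overline\Omega$}, not merely in $L^2(\Omega)$, and this does not follow from the spectral gap together with $\widehat u_0\in L^\infty\cap L^2$. The operator bound $\|R(t,\xi)\|_{L^2\to L^2}\le e^{-t\mu_2(\xi)}$ gives $L^2$ decay, but passing to $L^\infty(\Omega)$ requires summing
\begin{equation*}
  \Bigl\|\sum_{n\ge2} e^{-t\mu_n(\xi)}A_n(\xi)\phi_n(\xi,\cdot)\Bigr\|_{L^\infty(\Omega)}
  \le \sum_{n\ge2} e^{-t\mu_n(\xi)}|A_n(\xi)|\,\|\phi_n(\xi,\cdot)\|_{L^\infty(\Omega)},
\end{equation*}
and the $L^\infty$ norms of the eigenfunctions grow polynomially in both $\alpha_n$ and $\xi$. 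Showing that the exponential factor $e^{-t\mu_n(\xi)}$ beats this growth, uniformly in $\xi$ and summably in $n$ after multiplying by $c_t\sim \sqrt{t}\,e^{\alpha_1 t}$, is the real work: it needs an elliptic bootstrap giving $\|\phi_n(\xi,\cdot)\|_\infty\le C\bigl[1+(\alpha_n^{1/2}+\xi^2)^2\bigr]^k$ (the paper's Lemma~\ref{lem:supnorm}), a Weyl-type asymptotic guaranteeing $\sum_n\alpha_n^{-k}<\infty$ for $k>N/4$ (Corollary~\ref{cor:eig-series}), and careful Laplace-type integral comparisons (Lemma~\ref{lem:f/g-lim}). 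Your sentence ``follows from the spectral gaps established above'' skips all of this. Relatedly, lifting $e_1(\xi)$ into $C(\overline\Omega)$ via $H^4\cap H^2_0\hookrightarrow C(\overline\Omega)$ only works for $N\le 7$; for general $N$ one has to iterate the elliptic regularity to reach $H^{2(m+1)}$ with $2(m+1)>N/2$, which is exactly what the bootstrap in the paper does.
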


Theorem~\ref{thm:BHE-cyl} shows that the asymptotic profile of the rescaled solution is constant in the `infinite direction' (i.e.\ for $x \in \RR$) while it resembles the eigenfunction $e_1(\cdot)$ along the cross-sections $\Omega$ of the cylinder. The intuition here is similar as in the full space case previously discussed. The numbers $c_t$ counteract the decay of the solution. The function $v(x,y)=e_1(y)$, which is constant in the $x$-direction, takes the role of an `eigenfuction' associated with the spectral bound of $(-\Delta)^2$ on $\RR\times\Omega$. The limit~\eqref{eq:BHE-cyl-blowup-limit} is the projection of the initial condition into the direction of that `eigenfunction'.

\begin{remark}
  It is well-known that a maximum principle cannot be expected in general to hold for higher-order elliptic operators, so the eigenfunction $e_1$ is not necessarily of one sign. We discuss this further in Section~\ref{sec:4}. We note that the limit~\eqref{eq:BHE-cyl-blowup-limit} is independent of whether we choose to work with $e_1$ or $-e_1$.
\end{remark}

\begin{notation}
  Throughout this paper, if $E$ and $F$ are function spaces over the same domain $\Omega \subseteq \RR^N$, we write $E \cap F(\Omega)$ as an abbreviation for $E(\Omega) \cap F(\Omega)$.  For a measurable function $f\colon \Omega \subseteq \RR^N \to \RR$, we write $f \gneqq 0$ to mean that $f(x) \geq 0$ for a.e.\ $x \in \Omega$ and $f$ is not almost everywhere equal to 0.
\end{notation}

\subsection{Background on eventual positivity}
This paper was originally inspired by results from the theory of positive operator semigroups, which is by now a classic topic in operator theory---see for example the monograph~\cite{AGG}. Key features of this theory include the issues of asymptotic behaviour and stability, which are intimately linked to the spectral theory of linear operators. While these topics are interesting in abstract settings, they often have concrete manifestations in the study of partial differential equations, where the theory is applied to the semigroups generated by differential operators. We refer the reader to~\cite{BFR} for an accessible, modern survey of the theory of positive operator semigroups with an emphasis on concrete applications. Generally speaking, the study of positive linear operators is a natural extension of the classical Perron-Frobenius theory of positive matrices to infinite dimensional Banach spaces.

It seems that the phenomenon of \emph{eventual positivity} for operator semigroups in finite dimensions has been known for more than a decade, see for instance~\cite{NT} and the references therein. In infinite dimensions, motivated by~\cite{DD14}, a systematic theory of eventually positive semigroups and resolvents was developed recently by Kennedy and two of the present authors. The papers~\cite{DGK2, DGK1} contain the foundation of the theory, and there have since been various refinements and extensions~\cite{DG17, DG18, DG18-2, ArG20}.  In~\cite[Sections 7, 8]{AG20} the reader will find a snapshot of some applications of the theory of eventual positivity. Further applications to the analysis of partial differential equations can, for instance, be found in~\cite[Section 7]{DKP}, while the reader may consult~\cite[Section 6]{GM} and~\cite[Section 5]{BGM} for recent applications to the study of differential operators on graphs.

Very recently, a systematic operator-theoretic treatment of locally eventually positive semigroups was initiated by Arora in~\cite{Ar21}, with applications to the study of various differential equations on bounded domains. In~\cite{AGRT}, this theory was applied to study a fourth-order differential equation on $\RR^N$ but equipped with a probability measure. In the present paper, on the other hand, we study a particular equation on unbounded domains with infinite measure. A key difference is that in our case the spectral bound is not a simple eigenvalue, but part of the continuous spectrum. This necessitates a completely different set of tools in the analysis: we can no longer use properties of the spectral projection associated with the spectral value $0$, which have been a key feature in many results about eventually positive semigroups so far. Yet, while our techniques differ considerably from the tools applied in earlier papers on eventual positivity, we still note that the limiting objects~\eqref{eq:PHE-Rn-blowup-limit} and~\eqref{eq:BHE-cyl-blowup-limit} can be interpreted as `local' versions of spectral projections.

\section{Local eventual positivity of solutions}
\label{sec:LEP}

\subsection{Local eventual positivity on Euclidean space}
As a straightforward consequence of Theorem~\ref{eq:BHE-Rn}, we obtain a qualitative local eventual positivity result for solutions to equation~\eqref{eq:BHE-Rn}.

\begin{theorem}
  Let $\alpha > 0$ be fixed, and consider the solution $u = u(t,x)$ to the evolution equation
  \begin{equation}
    \label{eq:PHE-Rn}
    \begin{aligned}
      \frac{\partial u}{\partial t} + (-\Delta)^\alpha u & = 0
                                                         &          & \text{on } (0, \infty) \times \RR^N \\
      u(0, x)                                            & = u_0(x)
                                                         &          & \text{on } \RR^N
    \end{aligned}
  \end{equation}
  with initial datum $u_0 \in L^1 \cap L^2(\RR^N)$ such that
  \begin{equation}
    \label{eq:u0-pos-mass}
    \int_{\RR^N} u_0(x) \,dx>0.
  \end{equation}
  Then for every compact set $K \subseteq \RR^N$, there exists $T \geq 0$ depending on $u_0$ and $K$ such that
  \begin{equation*}
    u(t,x) > 0 \qquad \text{for all } (t,x)\in[T,\infty)\times K.
  \end{equation*}
\end{theorem}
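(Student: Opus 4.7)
The plan is to obtain this statement as an essentially immediate qualitative corollary of Theorem~\ref{thm:BHE-Rn}. Setting $M := \int_{\RR^N} u_0(y)\,dy$, the hypothesis~\eqref{eq:u0-pos-mass} says $M > 0$. Theorem~\ref{thm:BHE-Rn} provides, for the given compact set $K$, the uniform convergence
\[
c_t u(t,x) \longrightarrow M \qquad (t \to \infty), \quad \text{uniformly in } x \in K,
\]
where $c_t = \tfrac{(2\pi)^N}{M_\alpha} t^{N/2\alpha} > 0$ for every $t > 0$.

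From here I would simply apply the $\varepsilon$--$T$ formulation of uniform convergence with $\varepsilon = M/2$: there exists $T \geq 0$, depending on $u_0$ (via $M$ and the rate of convergence) and on $K$, such that for all $t \geq T$ and all $x \in K$,
\[
\bigl| c_t u(t,x) - M \bigr| < \frac{M}{2}, \qquad \text{hence} \qquad c_t u(t,x) > \frac{M}{2} > 0.
\]
Dividing by the strictly positive rescaling factor $c_t$ yields $u(t,x) > 0$ for all $(t,x) \in [T,\infty) \times K$, which is exactly the claim.

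There is no real obstacle here: the content of the theorem is a qualitative repackaging of the quantitative limit provided by Theorem~\ref{thm:BHE-Rn}. The only ingredient that genuinely matters is that the convergence is \emph{uniform} on $K$, so that a single threshold $T$ works simultaneously for all $x \in K$; pointwise convergence would not suffice. Since uniformity on compact sets is precisely what Theorem~\ref{thm:BHE-Rn} delivers, the argument is complete.
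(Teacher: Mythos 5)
Your proof is correct and follows the same route as the paper: both deduce the claim directly from the uniform convergence $c_t u(t,x) \to \int_{\RR^N} u_0\,dy > 0$ on $K$ provided by Theorem~\ref{thm:BHE-Rn}, using positivity of $c_t$. Your version merely spells out the $\varepsilon = M/2$ step that the paper leaves implicit.
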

\begin{proof}
  Let $K \subseteq \RR^N$ be an arbitrary compact set. If~\eqref{eq:u0-pos-mass} holds, then by Theorem~\ref{thm:BHE-Rn} there exist numbers $c_t > 0$ such that
  \begin{equation*}
    c_t u(t,x) \longrightarrow \int_{\RR^N} u_0(x) \,dx > 0
  \end{equation*}
  as $t \to \infty$ uniformly with respect to  $x\in K$. In particular there exists $T>0$ such that $u(t,x)>0$ for all $x\in K$ and $t>T$ as claimed.
\end{proof}
\begin{remark}
  The above theorem implies in particular that the solution of~\eqref{eq:PHE-Rn} is locally eventually positive if $u_0\gneqq 0$. We note that $\alpha=1$ corresponds to the classical heat equation, in which case the solution is positive for all $t\geq 0$ if $u_0\geq 0$. For local operators, we recall from the general theory in~\cite[Theorem~2.1]{ABR90} that the second-order operators are the only case where positivity is possible. For the fractional Laplacian, that is for $\alpha\in (0,1)$, the positivity is obtained directly from the fractional heat kernels, as shown in~\cite[Section 2]{VAZ}. In all other cases we can only expect (local) eventual positivity, unless we restrict to special classes of initial conditions as shown in \cite{GMO}.
\end{remark}

Our above result is a qualitative counterpart to the results in~\cite{GG-lep} and~\cite{FF-lep}, where the authors work explicitly with the polyharmonic heat kernels. We avoid the explicit kernels and employ Fourier analysis instead. We also point out that our results admit a larger class of initial data than was previously considered in the literature. In particular, we do not require continuity nor compactly supported functions. As a trade-off, our qualitative approach does not provide an estimate of the \emph{time to positivity}, that is, the quantity
\begin{equation*}
  T = T(K) := \inf_{\tau > 0} \{ u(t,x) > 0 \text{ for all } x \in K \text{ and } t \geq \tau \}.
\end{equation*}

\subsection{Local eventual positivity on infinite cylinders}
\label{sec:4}
We now investigate local eventual positivity for solutions of the biharmonic heat equation on the infinite cylinder $\RR \times \Omega$, where $\Omega$ is a bounded domain in $\RR^N$ satisfying Assumption~\ref{simple}. Theorem~\ref{thm:BHE-cyl} shows that the asymptotic behaviour of the solution to the parabolic problem on $\RR \times \Omega$ is determined by the sign of a normalised principal eigenfunction of the biharmonic operator with clamped (i.e.\ homogeneous Dirichlet) boundary conditions on the cylinder cross-section. Thus we need to discuss briefly the corresponding elliptic problem. There is now an extensive body of research on the positivity properties of solutions to the biharmonic Dirichlet problem
\begin{equation}
  \label{eq:BH-bvp2}
  \begin{aligned}
    \Delta^2 u & = f                                   &  & \text{in } \Omega         \\
    u          & = \frac{\partial u}{\partial \nu} = 0 &  & \text{on } \partial\Omega
  \end{aligned}
\end{equation}
and more general polyharmonic boundary value problems. We mention in particular the monograph~\cite{GGS} and the many references therein. The question of positivity of the first eigenfunction of the problem~\eqref{eq:BH-bvp2} with respect to the domain is a highly delicate one. From the spectral theory of positive irreducible operators---see for instance~\cite[Theorem 43.8]{Zaa} for a result suitable to our situation---one obtains a strictly positive principal eigenfunction whenever the Green's function of~\eqref{eq:BH-bvp2} is strictly positive. In the case where $\Omega$ is a ball, strict positivity of the Green's function can be easily deduced using the explicit formula of Boggio~\cite[Section 4.1]{GGS}, which of course implies that the solution to~\eqref{eq:BH-bvp2} satisfies $u \gneqq 0$ whenever $f \gneqq 0$. It is known that this property is preserved for ``sufficiently small'' perturbations of the ball in dimenions $N \geq 2$. These results and various generalisations are collected in~\cite[Chapter~6]{GGS}---note in particular Theorem 6.3 for domains in $\RR^2$ and Theorem 6.29 for domains in $\RR^N$ with $N \geq 3$. Alternatively, the reader may consult~\cite[Theorem 2]{GR10} for the case $N \geq 3$.

We also mention that it is possible to obtain a strictly positive first eigenfunction on domains where the biharmonic Dirichlet problem~\eqref{eq:BH-bvp2} is not positivity preserving (i.e.\ $f \gneqq 0$ does not imply $u \geq 0$). This was shown by Grunau and Sweers in~\cite{GS99} (see Theorem 2 in particular). In light of the present discussion, we make the following general conclusions regarding local eventual (non-)positivity of the parabolic problem on infinite cylinders.

\begin{theorem}
  \label{thm:LEP-cyl}
  Let $u_0 \in L^1 \cap L^2(\RR, L^2(\Omega))$, where $\Omega$ is a bounded domain in $\RR^N$ satisfying Assumption~\ref{simple}. Let $e_1$ be a normalised principal eigenfunction of the boundary value problem~\eqref{eq:BH-eig}. Assume that $u_0$ satisfies
  \begin{equation}
    \label{eq:u0-pos-mass-cyl}
    \int_\RR \int_\Omega u_0(x,y) e_1(y) \,dx \,dy > 0
  \end{equation}
  and let $u = u(t, x, y)$ be the solution to the biharmonic heat equation~\eqref{eq:BHE-cyl} on $\RR \times \Omega$ with initial datum $u_0$.

  Let $K_0$ be the zero set of the eigenfunction $e_1$, i.e.
  \begin{equation*}
    K_0 = \{ y \in \Omega\colon e_1(y) = 0 \}.
  \end{equation*}
  For every compact subset $K \subseteq \Omega \setminus K_0$ and compact interval $I \subset \RR$, there exists $T \geq 0$ depending only on $I \times K$ and the initial datum $u_0$ such that
  \begin{equation}
    \label{eq:LEP-cyl}
    \sgn(u(t,x,y)) = \sgn e_1(y)
  \end{equation}
  for all $t \geq T$ and $(x,y) \in I \times K$.
\end{theorem}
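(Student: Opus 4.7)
The plan is to derive Theorem~\ref{thm:LEP-cyl} as a fairly direct qualitative consequence of Theorem~\ref{thm:BHE-cyl}, using that the limiting profile $e_1(y)$ is continuous on $\bar{\Omega}$ and therefore bounded away from zero on any compact set that avoids its zero set $K_0$.

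First, I would invoke Theorem~\ref{thm:BHE-cyl} and abbreviate
\begin{equation*}
  C := \int_\RR\int_\Omega u_0(\xi,\eta)\,e_1(\eta)\,d\eta\,d\xi,
\end{equation*}
which is strictly positive by the hypothesis~\eqref{eq:u0-pos-mass-cyl}. Theorem~\ref{thm:BHE-cyl} gives positive numbers $c_t$ and uniform convergence
\begin{equation*}
  c_t\, u(t,x,y) \longrightarrow C\, e_1(y) \qquad (t\to\infty)
\end{equation*}
on $I\times\Omega$, and in particular on the subset $I\times K$.

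Next, I would use the regularity of $e_1$. Since $\partial\Omega$ is smooth, standard elliptic regularity applied to~\eqref{eq:BH-eig} ensures $e_1 \in C(\bar\Omega)$, so $e_1$ is continuous on $K$. Because $K\subseteq\Omega\setminus K_0$ is compact and $e_1$ does not vanish on $K$, the quantity
\begin{equation*}
  m := \min_{y\in K}\,\lvert e_1(y)\rvert
\end{equation*}
is strictly positive. Choose $T\ge 0$ large enough that
\begin{equation*}
  \lvert c_t u(t,x,y) - C\, e_1(y)\rvert < \tfrac{Cm}{2}
\end{equation*}
for all $t\ge T$ and all $(x,y)\in I\times K$; such a $T$ exists by the uniform convergence above. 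Then for every $y\in K$ the number $C\,e_1(y)$ has absolute value at least $Cm$, and the triangle inequality forces $c_t u(t,x,y)$ to have the same sign as $C\,e_1(y)$. Since $C>0$ and $c_t>0$, this sign equals $\sgn e_1(y)$, yielding~\eqref{eq:LEP-cyl}.

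There is no real obstacle here: once Theorem~\ref{thm:BHE-cyl} is in hand, the proof reduces to the standard principle that a uniform limit inherits a strict sign from its limit whenever the limit is bounded away from zero. The only subtlety worth writing down explicitly is the continuity of $e_1$ on the compact set $K$, which is what allows the passage from the pointwise sign condition $e_1(y)\ne 0$ on $K$ to the uniform lower bound $\lvert e_1(y)\rvert\ge m$ needed to convert uniform convergence into a statement about signs.
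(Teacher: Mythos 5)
Your proof is correct and follows essentially the same route as the paper's: both invoke the uniform convergence from Theorem~\ref{thm:BHE-cyl}, use the continuity of $e_1$ on the compact set $K$ to obtain a uniform positive lower bound on $|e_1|$, and then convert uniform convergence into a sign statement. The only difference is presentational---you make the threshold $Cm/2$ and the triangle-inequality step explicit, whereas the paper leaves this routine detail implicit.
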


\begin{proof}
  By Theorem~\ref{thm:BHE-cyl}, there exist numbers $c_t > 0$ such that
  \begin{displaymath}
    \lim_{t \to \infty} c_t u(t,x,y) = \int_\RR \int_\Omega u_0(\xi, \eta) e_1(\eta) \,d\xi \,d\eta \; e_1(y)
  \end{displaymath}
  uniformly with respect to $(x,y) \in I \times \Omega$. At every $y \in K$, $e_1(y)$ has a well-defined sign, and by the continuity there exists $\delta>0$ such that $|e_1(y)|>\delta$ for all $y\in K$. Now it follows from the assumption~\eqref{eq:u0-pos-mass-cyl} that there exists $T>0$ such that~\eqref{eq:LEP-cyl} holds for all $t\geq T$ and $(x,y)\in I \times K$.
\end{proof}

\begin{remark}
  If $\Omega$ is a domain on which the principal eigenfunction of problem~\eqref{eq:BH-eig} may be chosen strictly positive, then $K_0 = \emptyset$, and we have local eventual positivity. Stated more precisely, for every compact subset $K \subseteq \Omega$ and compact interval $I \subset \RR$ there exists a time $T \geq 0$ depending on $u_0$ and $I \times K$ such that
  \begin{equation*}
    u(t,x,y) > 0
  \end{equation*}
  for all $t \geq T$ and for all $(x,y) \in I \times K$.
\end{remark}

\section{The biharmonic heat equation on Euclidean space}
\label{sec:BHE-Rn}

\subsection{The initial value problem and its Fourier transform}

We use the following convention for the Fourier transform $\widehat{u}_0$ of $u_0$:
\begin{equation*}
  \widehat{u}_0(\omega)
  = \frac{1}{(2\pi)^{N/2}} \int_{\RR^N} u_0(x) e^{-i \omega \cdot x} \,dx.
\end{equation*}
Defined this way, the Fourier transform is an isometric isomorphism of $L^2(\RR^N)$ (Plancherel's theorem). Note that the assumption $u_0 \in L^1 \cap L^2(\RR^N)$ combined with the Riemann-Lebesgue Lemma yields
\begin{equation}
  \label{eq:PHE1-u0-hat}
  \widehat{u}_0 \in L^2\cap C_0(\RR^N).
\end{equation}
We begin by taking the spatial Fourier transform of the initial value problem~\eqref{eq:PHE-Rn}, thus obtaining
\begin{equation*}
  \begin{aligned}
    \frac{\partial}{\partial t}\widehat{u}(t,\omega)
     & = -|\omega|^{2\alpha} \widehat{u}(t,\omega), \\
    \widehat{u}(0, \omega)
     & = \widehat{u}_0(\omega).
  \end{aligned}
\end{equation*}
Solving this differential equation explicitly we see that
\begin{equation*}
  \widehat{u}(t, \omega) = e^{-t|\omega|^{2\alpha}} \widehat{u}_0(\omega).
\end{equation*}
It is clear from~\eqref{eq:PHE1-u0-hat} and the rapid decay of the kernel $e^{-t|\omega|^{2\alpha}}$ that $\widehat{u}(t, \cdot) \in L^1 \cap L^2(\RR^N)$ for each $t > 0$. Thus the solution to~\eqref{eq:PHE-Rn} can be obtained by the inverse Fourier transform
\begin{equation}
  \label{eq:PHE-Rn-sol}
  u(t, x) = (2\pi)^{-N/2}\int_{\RR^N} e^{-t|\omega|^{2\alpha}} \widehat{u}_0(\omega) e^{i \omega \cdot x} \,d\omega.
\end{equation}

\subsection{Asymptotic behaviour of solutions}

As shown already in Section~\ref{sec:LEP}, the local eventual positivity of solutions to~\eqref{eq:BHE-Rn} for $\alpha>0$ is a straightforward consequence of Theorem~\ref{thm:BHE-Rn}. The techniques we use in the proof of this theorem can be extended to more general differential operators, as we will show in  Section~\ref{subsec:further}. Moreover, the analysis on $\RR^N$ can be considered as setting the stage for the more complicated analysis on cylinders in Section~\ref{sec:cyl}.

The key ingredient for the proof of Theorem~\ref{thm:BHE-Rn} is the following observation. For $\omega\in\RR$ and $\alpha>0$ let
\begin{equation*}
  \varphi_1(\omega):=\frac{e^{-|\omega|^{2\alpha}}}{M_\alpha}
  \qquad\text{with}\quad
  M_\alpha:=\int_{\RR^N}e^{-|\omega|^{2\alpha}}\,d\omega.
\end{equation*}
and define
\begin{equation}
  \label{eq:approx-id}
  \varphi_t(\omega):=t^{N/2\alpha}\varphi_1(t^{1/2\alpha}\omega)
\end{equation}
for all $t>0$. By the change of variables $s = t^{1/2\alpha}\omega$, we see that
\begin{equation*}
  \int_{\RR^N} \varphi_t(\omega) \,d\omega
  = \int_{\RR^N} \varphi_1(s) \,ds
  = 1
\end{equation*}
for all $t>0$. Hence the family $(\varphi_t)_{t>0}$ is an \emph{approximate identity} as $t \to \infty$ in the following sense.

\begin{definition}
  \label{def:approx-id}
  An \emph{approximate identity} as $t \to \infty$ is a family of measurable functions $(\rho_t)_{t > 0}$ from $\RR^N$ to $\RR$ such that
  \begin{enumerate}[(i)]
  \item $\rho_t(\omega) \geq 0$ for almost every $\omega \in \RR^N$;
  \item $\int_{\RR^N} \rho_t(\omega)\,d\omega = 1$ for all $t > 0$; and
  \item $\int_{|\omega| \geq \delta} \rho_t(\omega)\,d\omega \to 0$ as $t \to \infty$ for each $\delta > 0$.
  \end{enumerate}
\end{definition}
One has the following standard result on convolution with approximate identities. We include a proof to accommodate an additional parameter.
\begin{lemma}
  \label{lem:approx-id}
  Let $(\rho_t)_{t > 0}$ be an approximate identity as $t \to \infty$ and let $U$ be a non-empty set. Suppose that $f\colon\RR^N\times U\to\RR$ is bounded and that $f(\cdot\,,x)$ is measurable for all $x\in U$. If $f(\cdot\,,x)$ is continuous at $\omega_0\in\RR^N$ uniformly with respect to $x\in U$, then
  \begin{equation*}
    \lim_{\omega\to \omega_0}(\rho_t * f(\cdot,x))(\omega_0)
    =f(\omega_0,x)
  \end{equation*}
  uniformly with respect to $x\in U$.
\end{lemma}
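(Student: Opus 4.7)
The plan is the standard approximate-identity split argument, carried out carefully enough to keep every estimate uniform in the parameter $x \in U$. The key starting move is to use property (ii) of Definition~\ref{def:approx-id} to write $f(\omega_0, x) = f(\omega_0, x) \int_{\RR^N} \rho_t(\xi)\, d\xi$ and fold this into the convolution, obtaining the identity
\[
(\rho_t * f(\cdot, x))(\omega_0) - f(\omega_0, x) = \int_{\RR^N} \rho_t(\xi) \bigl[f(\omega_0 - \xi, x) - f(\omega_0, x)\bigr]\, d\xi.
\]
Measurability of $f(\cdot, x)$ together with the global bound on $f$ ensures the integrand is integrable against $\rho_t$, so the problem reduces to estimating the right-hand side uniformly in $x$.

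Given $\varepsilon > 0$, the uniform-in-$x$ continuity hypothesis at $\omega_0$ produces a single $\delta > 0$, valid for every $x \in U$ simultaneously, such that $|f(\omega_0 - \xi, x) - f(\omega_0, x)| < \varepsilon$ whenever $|\xi| < \delta$. Splitting the integration domain at $|\xi| = \delta$, the contribution of the inner region is at most $\varepsilon \int_{\RR^N} \rho_t(\xi)\, d\xi = \varepsilon$ by properties (i) and (ii), with a bound independent of both $t$ and $x$.

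For the outer region, a uniform bound $|f| \leq M$ gives an integrand bounded by $2M$, so that this part of the integral is at most $2M \int_{|\xi| \geq \delta} \rho_t(\xi)\, d\xi$, which by property (iii) of the approximate identity tends to $0$ as $t \to \infty$. Since $\delta$ is common to all $x \in U$, this tail estimate depends only on $t$ and can be made smaller than $\varepsilon$ uniformly in $x$ once $t$ is sufficiently large; combining with the inner estimate yields the claim. The one point requiring care is precisely this choice of a single $\delta$ valid for all parameters: it is exactly the uniform-in-$x$ continuity hypothesis that enables it, and without it the inner estimate would still hold pointwise but not uniformly in the conclusion. Everything else is a routine approximate-identity argument.
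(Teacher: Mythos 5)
Your proof is correct and follows essentially the same strategy as the paper's: write the difference as $\int \rho_t(\xi)\bigl[f(\omega_0-\xi,x)-f(\omega_0,x)\bigr]\,d\xi$, split at $|\xi|=\delta$, use the uniform-in-$x$ continuity to pick a single $\delta$ for the inner part, and let property (iii) kill the tail. The only differences (change of variable in the convolution, bounding the inner part directly rather than via a supremum) are cosmetic.
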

\begin{proof}
  Since $\rho_t\geq 0$ and $\|\rho_t\|_1=1$ we see that for every $\delta>0$
  \begin{equation*}
    \begin{split}
      |(\rho_t * f(\cdot,x))(\omega_0)&-f(\omega_0,x)|
      =\Bigl|\int_{\RR^N}\rho_t(\omega_0-\omega)\bigl(f(\omega,x)-f(\omega_0,x)\bigr)\,d\omega\\
      &\leq\int_{\RR^N}\rho_t(\omega_0-\omega)\bigl|f(\omega,x)-f(\omega_0,x)\bigr|\,d\omega\\
      &\leq\int_{|\omega-\omega_0|\geq\delta}\rho_t(\omega_0-\omega)\bigl|f(\omega,x)-f(\omega_0,x)\bigr|\,d\omega\\
      &\qquad+\int_{|\omega-\omega_0|<\delta}\rho_t(\omega_0-\omega)\bigl|f(\omega,x)-f(\omega_0,x)\bigr|\,d\omega\\
      &\leq 2\|f\|_\infty\int_{|\omega|\geq\delta}\rho_t(\omega)\,d\omega
      +\sup_{|\omega-\omega_0|<\delta}\bigl|f(\omega,x)-f(\omega_0,x)\bigr|.
    \end{split}
  \end{equation*}
  Let $\varepsilon>0$ be arbitrary. By the uniform continuity of $f(\omega,x)$ at $\omega_0$ with respect to $x\in U$ there exists $\delta>0$ such that
  \begin{equation*}
    \sup_{|\omega-\omega_0|<\delta}\bigl|f(\omega,x)-f(\omega_0,x)\bigr|<\frac{\varepsilon}{2}
  \end{equation*}
  for all $x\in U$. With this choice of $\delta>0$, using the definition of an approximate identity, there exists $t_0>0$ such that
  \begin{equation*}
    0\leq  2\|f\|_\infty\int_{|\omega|\geq\delta}\rho_t(\omega)\,d\omega<\frac{\varepsilon}{2}
  \end{equation*}
  for all $t>t_0$.  Putting everything together we see that
  \begin{equation*}
    |(\rho_t * f(\cdot,x))(\omega_0)-f(\omega_0,x)|
    <\frac{\varepsilon}{2}+\frac{\varepsilon}{2}
    =\varepsilon
  \end{equation*}
  for all $t>t_0$. As $\varepsilon>0$ was arbitrary this proves the lemma.
\end{proof}

We now present the proof of Theorem~\ref{thm:BHE-Rn}.

\begin{proof}[Proof of Theorem~\ref{thm:BHE-Rn}]
  Let $c_t$ be defined as in~\eqref{eq:PHE-Rn-ct} and let $(\varphi_t)_{t>0}$ be the approximate identity given by~\eqref{eq:approx-id}. Moreover, let $\check\varphi_t(x):=\varphi_t(-x)$ be the reflection of $\varphi_t$. (In this case $\check\varphi_t=\varphi_t$, but for the benefit of possible generalisations we keep the notation). For $x,\omega\in\RR^N$, we define
  \begin{equation*}
    f(\omega,x):=(2\pi)^{N/2}\widehat{u}_0(\omega)e^{i\omega\cdot x}.
  \end{equation*}
  Recall that the solution $u(t,x)$ is given by~\eqref{eq:PHE-Rn-sol}. Multiplying it by $c_t$ we see that
  \begin{equation}
    \label{eq:ctu-convolution}
    c_tu(t,x)
    =(2\pi)^{N/2}\int_{\RR^N}\varphi_t(\omega)\hat u_0(\omega)e^{ix\cdot\omega}\,d\omega
    =\bigl(\check\varphi_t*f(\cdot\,,x)\bigr)(0)
  \end{equation}
  for all $x\in\RR^N$. As $u_0\in L^1\cap L^2(\RR^N)$ we have $\widehat{u}_0\in L^2\cap C_0(\RR^N)$ and thus $f\in C(\RR^N\times\RR^N)$. If $K\subseteq\RR^N$ is compact and $\delta_0>0$, then $f\colon[-\delta_0,\delta_0]\times K\to\CC$ is uniformly continuous and hence $\omega\mapsto f(\omega,x)$ is continuous at $\omega=0$ uniformly with respect to $x\in K$. As $(\check\varphi_t)_{t>0}$ is an approximate identity, it follows from~\eqref{eq:ctu-convolution} and Lemma~\ref{lem:approx-id} that
  \begin{equation*}
    \lim_{t\to\infty}c_tu(t,x)
    =f(0,x)
    =(2\pi)^{N/2}\widehat{u}_0(0)
    =\int_{\RR^N}u_0(y)\,dy
  \end{equation*}
  uniformly with respect to $x\in K$, completing the proof of Theorem~\ref{thm:BHE-Rn}.
\end{proof}

\subsection{Possible generalisations}
\label{subsec:further}
Let us make some remarks about the validity of Theorem~\ref{thm:BHE-Rn} for general linear differential operators with constant coefficients. Using the standard multi-index notation
\begin{equation*}
  D^\alpha = \frac{\partial^{\alpha_1}}{\partial x_1} \frac{\partial^{\alpha_2}}{\partial x_2} \cdots \frac{\partial^{\alpha_N}}{\partial x_N},
  \qquad \alpha = (\alpha_1, \ldots, \alpha_N) \in \NN^N,
\end{equation*}
such an operator takes the form
\begin{equation*}
  P(D) := \sum_{0 \leq |\alpha| \leq d} c_\alpha (iD)^\alpha \qquad (c_\alpha \in \RR)
\end{equation*}
where $d$ is order of the operator and $|\alpha| = \sum_{i=1}^N \alpha_i$. Upon taking the Fourier transform, we obtain
\begin{equation*}
  \widehat{(iD^\alpha) u} = \omega^\alpha \widehat{u}
\end{equation*}
where $\omega^\alpha$ is a standard abbreviation for $\omega_1^{\alpha_1} \omega_2^{\alpha_2} \cdots \omega_N^{\alpha_N}$. Hence
\begin{equation*}
  \widehat{ P(iD)u } = P(\omega) \widehat{u}
\end{equation*}
and $P(\omega)$ is called the \emph{symbol} of the operator. The associated evolution equation is given by
\begin{equation}
  \label{eq:evol-gen}
  \begin{aligned}
    \frac{\partial u}{\partial t} + P(iD)u & = 0      &  & \text{on } (0, \infty) \times \RR^N \\
    u(0, x)                                & = u_0(x) &  & \text{on } \RR^N.
  \end{aligned}
\end{equation}
\begin{enumerate}[(i)]
\item In the simplest case, suppose that $P(\cdot)$ is a homogeneous polynomial of even order, say $d = 2m$ for some $m \geq 1$, with the following structure:
  \begin{equation*}
    P(\omega) = (-1)^m \sum_{|\alpha| = 2m} c_\alpha \omega^\alpha \qquad c_\alpha \geq 0 \text{ and not all }0.
  \end{equation*}
  After taking Fourier transforms, the evolution equation becomes
  \begin{equation*}
    \frac{d \widehat{u}}{dt} = -P(\omega) \widehat{u}.
  \end{equation*}
  With initial datum $u_0$, we obtain
  \begin{equation*}
    \widehat{u}(t, \omega) = e^{-t P(\omega)} \widehat{u}_0(\omega) \qquad t \geq 0, \omega \in \RR^N,
  \end{equation*}
  and thus, at least formally, the solution to the evolution equation is given by the Fourier inverse of the above. By virtue of the homogeneity of the polynomial $P$, we can set $s = t^{1/2m} \omega$ to obtain
  \begin{equation*}
    \int_{\RR^N} e^{-tP(\omega)} \,d\omega = t^{-N/2m} \int_{\RR^N} e^{-P(s)} \,ds.
  \end{equation*}
  Provided that $\int_{\RR^N} e^{-P(s)} \,ds < \infty$, the blow-up factors $c_t$ can therefore be defined by
  \begin{equation*}
    c_t := t^{N/2m} \left(\int_{\RR^N} e^{-P(s)} \,ds \right)^{-1}.
  \end{equation*}
  If it can be shown that $\varphi_t(\omega) = c_t e^{-t P(\omega)}$ defines an approximate identity in the sense of Definition~\ref{def:approx-id}, then the techniques used in the proof of Theorem~\ref{thm:BHE-Rn} can be extended to this situation.

\item If we include lower-order terms in the operator $P(iD)$, then obviously the change of variables introduced above does not work. However, provided that we have good estimates on the symbol $P(\omega)$, it may be possible to show that $\varphi_t(\omega) = c_t e^{-tP(\omega)}$ defines an approximate identity nonetheless. In fact, we will encounter a similar situation in the proof of Theorem~\ref{thm:BHE-cyl}.
\end{enumerate}

\section{Biharmonic heat equation on infinite cylinders}
\label{sec:cyl}

In this section we prove Theorem~\ref{thm:BHE-cyl}. The analysis is more technical than the problem on $\RR^N$, and we will need a handful of preparatory results before proceeding with the main proof.

\subsection{A parametrised family of elliptic operators}

A common approach to solve the heat equation on $\RR\times\Omega$ is to use separation of variables. If we take an initial function with the special form $u_0(x,y) = f(x)g(y)$, we can build a solution to the evolution problem by solving $v_t = v_{xx}$ with $v(0, x) = f(x)$ on $\RR$, and $w_t = \Delta_y w$ with $w(0, y) = g(y)$ on $\Omega$ and $w(t, \cdot) = 0$ on $\partial\Omega$ for all $t \geq 0$. Here, as in the sequel, $\Delta_y$ denotes the Laplacian in only the $y$ variable. Then it is easy to see that
\begin{equation*}
  u(t,x,y) := v(t,x) w(t, y)
\end{equation*}
solves $u_t = \Delta u$ on $\RR \times \Omega$ with initial condition $u_0(x,y)$ and Dirichlet boundary condition $u(t, x, \cdot) = 0$ on $\partial\Omega$. In contrast to the second order equation, a similar separation of variables is not possible for the biharmonic operator.

Nevertheless, we proceed by taking the partial Fourier transform of problem~\eqref{eq:BHE-cyl} with respect to the $x$ variable. We obtain a family of functions $\eta(t, \omega, y) := \widehat{u}(t, \omega, y)$ such that for each $\omega \in \RR$, the function $\eta$ solves the equation
\begin{equation}
  \label{eq:eta-eq}
  \begin{aligned}
    \frac{\partial \eta}{\partial t} & = -\Delta_y^2 \eta + 2 \omega^2 \Delta_y \eta - \omega^4 \eta
                                     &                                                               & \text{on } (0, \infty) \times \Omega,                                                          \\
    \eta(0, \omega, y)               & = \widehat{u}_0(\omega,y)
                                     &                                                               & \text{for } y \in \Omega,                                                                      \\
    \eta(t,\omega,\cdot)             & = \frac{\partial \eta}{\partial \nu}(t,\omega,\cdot) = 0      &                                       & \text{on } \partial \Omega \text{ for each } t \geq 0,
  \end{aligned}
\end{equation}
where
\begin{equation*}
  \widehat{u}_0(\omega,y) = \frac{1}{\sqrt{2\pi}}\int_\RR u_0(x,y) e^{-i \omega x} \,d x
\end{equation*}
is the partial Fourier transform of the initial datum in the $x$ variable. By the vector-valued versions of the Riemann-Lebesgue Lemma and Plancherel's theorem from~\cite[Theorems~1.8.1, and~1.8.2]{ABHN} and the assumption~\eqref{eq:BHE-cyl-u0}, we conclude that
\begin{equation}
  \label{eq:u0-hat}
  \widehat{u}_0 \in C_0 \cap L^2(\RR, L^2(\Omega)).
\end{equation}
In particular we deduce that
\begin{equation}
  \label{eq:u0-hat-bounded}
  M:=\sup_{\omega \in \RR} \| \widehat{u}_0(\omega, \cdot) \|_{L^2(\Omega)} < \infty
\end{equation}
and that
\begin{equation}
  \label{eq:u0-hat-plancerel}
  \int_\RR \| \widehat{u}_0(\omega, \cdot) \|^2_{L^2(\Omega)} \, d\omega
  = 2\pi\int_\RR \|u(x, \cdot)\|^2_{L^2(\Omega)} \, dx
  < \infty.
\end{equation}
For each $\omega\in\RR$, let $(-\mu_n(\omega))_{n \geq 1}$ be the family of eigenvalues  of the operator
\begin{equation}
  \label{eq:L-omega}
  L_\omega := -\Delta_y^2 +2 \omega^2 \Delta_y - \omega^4
\end{equation}
with domain
\begin{equation*}
  H^4 \cap H^2_0(\Omega).
\end{equation*}
We take these eigenvalues in increasing order counting multiplicities. We also choose a family of corresponding eigenfunctions $(\phi_n(\omega, \cdot))_{n \in \NN}$ forming an orthonormal basis of $L^2(\Omega)$. At the moment we do this for fixed $\omega$, but we will later need some regularity of $\mu_n$ and $\phi_n$ in $\omega$. This will be established in Proposition~\ref{prop:cont-phi-1}. Hence, the solution of~\eqref{eq:eta-eq} can be represented by the Fourier series
\begin{equation}
  \label{eq:eta-Fourier-series}
  \eta(t, \omega, y) = \sum_{n=1}^\infty e^{-t \mu_n(\omega)} A_n(\omega) \phi_n(\omega, y),
\end{equation}
where the coefficients are given by
\begin{equation}
  \label{eq:An}
  A_n(\omega) = \int_\Omega \widehat{u}_0(\omega,y) \phi_n(\omega, y) \,dy.
\end{equation}
In the sequel, it will be useful to consider the operator $-L_\omega$ instead. Then for each fixed $\omega \in \RR$, the eigenvalues of $-L_\omega$ satisfy $\mu_n(\omega) > 0$ for all $n \geq 1$. In fact, as we will show in Lemma~\ref{lem:mu-omegas}(iii), it holds that $0 < \lambda_1^2 \leq \mu_n(\omega)$ for all $n \ge 1$ and all $\omega \in \RR$, where $\lambda_1$ is the principal eigenvalue of the Dirichlet Laplacian. The bilinear form associated with $-L_\omega$ is given by
\begin{equation}
  \label{eq:a-omega}
  \mfra_\omega(u,v) := \int_\Omega \Delta u \, \Delta v \,dy + 2\omega^2 \int_\Omega \nabla u \cdot \nabla v \,dy + \omega^4 \int_\Omega uv \,dy
\end{equation}
defined for all $u, v \in H^2_0(\Omega)$. Note that $-L_0 = \Delta^2_y$ is the biharmonic operator. We will drop the subscript $y$ for notational convenience.

\subsection{Analysis of the parametrised elliptic equations}

We begin with a simple, general result for the Sobolev space $H^2_0(\Omega)$. The notation $D^2 u$ denotes the Hessian matrix of $u$.
\begin{proposition}
  \label{prop:H22-norm}
  Let $\Omega \subseteq \RR^N$ be a bounded open set. Then the quantity
  \begin{equation*}
    |u|_{2,2} := \|\Delta u\|_{L^2(\Omega)}
  \end{equation*}
  defines a norm on $H^2_0(\Omega)$ that is equivalent to the usual $H^2$ norm:
  \begin{equation*}
    \|u\|_{H^2(\Omega)} = \left(\|u\|^2_{L^2(\Omega)} + \|\nabla u\|^2_{L^2(\Omega)} + \|D^2 u\|^2_{L^2(\Omega)} \right)^{1/2},
  \end{equation*}
  where $\|D^2u\|_{L^2(\Omega)}$ is the Hilbert-Schmidt or Frobenius norm of the Hessian matrix.
\end{proposition}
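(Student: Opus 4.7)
I would prove this along the following standard lines, leveraging the zero boundary trace to do integration by parts without boundary terms.

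First I would dispense with the easy inequality. For any $u\in H^2(\Omega)$, the pointwise Cauchy–Schwarz bound $|\Delta u|^2=\bigl(\sum_{i}\partial_i^2 u\bigr)^2\le N\sum_{i}(\partial_i^2 u)^2\le N\,\|D^2u\|^2_{\mathrm{HS}}$ integrates to give $\|\Delta u\|_{L^2}\le \sqrt{N}\,\|u\|_{H^2}$. This shows $|\cdot|_{2,2}$ is controlled by $\|\cdot\|_{H^2}$ and that it is at least a seminorm. What remains is the reverse estimate
\begin{equation*}
\|u\|_{H^2(\Omega)}\;\le\; C\,\|\Delta u\|_{L^2(\Omega)}\qquad \text{for all } u\in H^2_0(\Omega),
\end{equation*}
which simultaneously gives equivalence and positive definiteness (if $\|\Delta u\|_{L^2}=0$ then $u=0$).

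The key is that $C_c^\infty(\Omega)$ is dense in $H^2_0(\Omega)$, so it is enough to establish the estimate for $u\in C_c^\infty(\Omega)$ and then pass to the limit. For such $u$, I would extract control of each piece of the $H^2$ norm from $\|\Delta u\|_{L^2}$ in turn.
\begin{enumerate}[(a)]
\item Integration by parts gives $\int_\Omega |\nabla u|^2\,dy=-\int_\Omega u\,\Delta u\,dy$, whence by Cauchy–Schwarz $\|\nabla u\|_{L^2}^2\le \|u\|_{L^2}\|\Delta u\|_{L^2}$.
\item Since $u\in H^1_0(\Omega)$ and $\Omega$ is bounded, Poincaré's inequality yields $\|u\|_{L^2}\le C_P\|\nabla u\|_{L^2}$. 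Combining with (a) gives $\|\nabla u\|_{L^2}\le C_P\|\Delta u\|_{L^2}$ and then $\|u\|_{L^2}\le C_P^2\|\Delta u\|_{L^2}$.
\item To handle the Hessian, extend $u$ by zero to a function in $C_c^\infty(\RR^N)$ and use Plancherel: since $\widehat{\partial_i\partial_j u}(\xi)=-\xi_i\xi_j\widehat u(\xi)$ and $\sum_{i,j}\xi_i^2\xi_j^2=|\xi|^4$, one obtains
\begin{equation*}
\|D^2 u\|^2_{L^2(\Omega)}=\sum_{i,j}\int_{\RR^N}\xi_i^2\xi_j^2|\widehat u(\xi)|^2\,d\xi=\int_{\RR^N}|\xi|^4|\widehat u(\xi)|^2\,d\xi=\|\Delta u\|^2_{L^2(\Omega)}.
\end{equation*}
\end{enumerate}
Adding (b) and (c) yields $\|u\|_{H^2(\Omega)}\le C\|\Delta u\|_{L^2(\Omega)}$ on $C_c^\infty(\Omega)$, and by density on all of $H^2_0(\Omega)$.

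The main (mild) obstacle is justifying step (c) rigorously for $u\in H^2_0(\Omega)$ rather than just for smooth compactly supported functions: one needs to know that the zero extension of an $H^2_0(\Omega)$ function belongs to $H^2(\RR^N)$. This is a classical fact that follows from density of $C_c^\infty(\Omega)$ in $H^2_0(\Omega)$ combined with the observation that the zero-extension operator is an isometry from $C_c^\infty(\Omega)$ (with the $H^2(\Omega)$ norm) into $H^2(\RR^N)$, hence extends by continuity. Once this is in hand, the Plancherel identity transfers from $C_c^\infty(\RR^N)$ to the extensions and we are done.
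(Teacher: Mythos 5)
Your proof is correct, and it reaches the same destination by a slightly different route in two places. The paper establishes the pivotal identity $\|D^2u\|_{L^2}=\|\Delta u\|_{L^2}$ on $C_c^\infty(\Omega)$ by integrating by parts twice, converting $\int (\partial_i\partial_j u)^2$ into $\int (\partial_{ii}u)(\partial_{jj}u)$ and summing, then extends by density; you obtain the same identity via zero extension and Plancherel's theorem, trading a local algebraic manipulation for a Fourier-side computation. Both are standard and of comparable length, though the integration-by-parts version is more self-contained in the bounded-domain setting and avoids your final-paragraph detour justifying that the zero-extension operator lands in $H^2(\RR^N)$. For the gradient bound, the paper applies Poincar\'e to the first derivatives $\partial_j u\in H^1_0(\Omega)$ to get $\|\nabla u\|_{L^2}\leq C\|D^2u\|_{L^2}$ directly, whereas you use the interpolation-type identity $\|\nabla u\|_{L^2}^2=-\int u\,\Delta u\leq\|u\|_{L^2}\|\Delta u\|_{L^2}$ together with Poincar\'e on $u$; again both work, with yours being marginally slicker since it avoids needing the Hessian identity before controlling the gradient. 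Finally, you note explicitly that the reverse inequality forces $u=0$ when $\|\Delta u\|_{L^2}=0$, which is a clean way to get positive definiteness; the paper instead appeals to well-posedness of the second-order Dirichlet problem, which is equivalent but less direct.
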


\begin{proof}
  Firstly, we show that $\|D^2 u\|_{L^2(\Omega)} = \| \Delta u \|_{L^2(\Omega)}$ for all $u \in C^\infty_0(\Omega)$. Let $\partial_j u$ ($j=1, \ldots, N$) denote any of the partial derivatives of $u$. Then, using integration by parts twice, we obtain
  \begin{multline*}
    \|D^2 u \|^2_{L^2(\Omega)} = \sum_{i,j = 1}^N \int_\Omega (\partial_i \partial_j u)^2 \,dy
    = \sum_{i,j = 1}^N \int_\Omega (\partial_{ii} u)(\partial_{jj}u) \,dy \\
    = \int_\Omega (\Delta u)^2 \,dy = \|\Delta u\|^2_{L^2(\Omega)}.
  \end{multline*}
  Now the above identity extends by density to all $u \in H^2_0(\Omega)$.

  If $u \in H^2_0(\Omega)$, then $\partial_j u \in H^1_0(\Omega)$ for each $j=1, \ldots, N$. Using the Poincar\'{e} inequality on $\partial_j u$, we find that there exists a constant $C_1 = C_1(\Omega)$ such that
  \begin{equation*}
    \| \nabla u \|_{L^2(\Omega)} \leq C_1 \| D^2 u \|_{L^2(\Omega)} = C_1 \| \Delta u \|_{L^2(\Omega)}.
  \end{equation*}
  Applying the Poincar\'{e} inequality to $u$, we obtain $\|u\|_{L^2(\Omega)} \leq C_2 \|\nabla u\|_{L^2(\Omega)}$ for another constant $C_2 = C_2(\Omega)$. In conclusion we obtain $C = C(\Omega)$ such that
  \begin{equation*}
    \|u\|_{H^2(\Omega)} \leq C \|D^2 u\|_{L^2(\Omega)} = C \|\Delta u\|_{L^2(\Omega)} = C|u|_{2,2}.
  \end{equation*}
  Finally, the fact that $|u|_{2,2}$ is a norm follows from well-posedness of the second-order Dirichlet problem.
\end{proof}

\begin{remark}
  \label{rmk:quad}
  Proposition~\ref{prop:H22-norm} shows the coercivity of the bilinear form $\mfra_0(u,v) = \int_\Omega \Delta u \, \Delta v \,dy$ defined for all $u, v \in H^2_0(\Omega)$. Since $\mfra_0(u,u) \leq \mfra_{\omega}(u,u)$ from the definition~\eqref{eq:a-omega} for all $\omega \in \RR$, it follows that each of the bilinear forms $\mfra_\omega(\cdot,\cdot)$ are coercive. Furthermore, we have the estimate
  \begin{multline}
    \label{eq:D2phi-estimate}
    \| \Delta \phi_n(\omega, \cdot) \|_{L^2(\Omega)}
    = \mfra_0(\phi_n(\omega, \cdot), \phi_n(\omega, \cdot))^{1/2}\\
    \leq \mfra_\omega(\phi_n(\omega, \cdot), \phi_n(\omega, \cdot))^{1/2}
    = \mu_n(\omega)^{1/2}
  \end{multline}
  if we choose the eigenfunctions $\phi_n(\omega,\cdot)$ to be normalised in $L^2(\Omega)$.
\end{remark}

The eigenvalues of the biharmonic operator play a distinguished role in the sequel. Consequently we define
\begin{equation*}
  \alpha_n := \mu_n(0) \qquad \text{for each integer } n \ge 1.
\end{equation*}
Now we present a comparison result between the eigenvalues of $-L_0$, $-L_\omega$ and those of the Dirichlet Laplacian. They will be used frequently in the main proof of this section.

\begin{lemma}
  \label{lem:mu-omegas}
  Let the differential operators $-L_\omega$ be defined as in~\eqref{eq:L-omega} with corresponding eigenvalues $(\mu_n(\omega))_{n \ge 1}$ in ascending order counting multiplicity. Set $\alpha_n := \mu_n(0)$. For each integer $n \ge 1$, the following assertions hold:
  \begin{enumerate}[\upshape (i)]
  \item $\mu_n\in C(\RR)$ is positive, even and strictly increasing as a function of $|\omega|$.
  \item For every $\omega \in \RR$ and $n\geq 1$, we have
    \begin{equation*}
      0
      <\alpha_n + \omega^4
      \leq \mu_n(\omega)
      \leq \alpha_n + 2 \alpha_n^{1/2} \omega^2 + \omega^4.
    \end{equation*}
  \item If $(\lambda_n)_{n\geq 1}$ are the eigenvalues of the Dirichlet Laplacian on $\Omega$, then
    \begin{equation}
      \label{eq:DL-biharmonic}
      \lambda_n^2 \leq \alpha_n \qquad \text{for all } n \ge 1.
    \end{equation}
  \end{enumerate}
\end{lemma}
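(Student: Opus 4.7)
My plan is to deduce all three parts from the Courant--Fischer min-max characterization applied to the forms $\mfra_\omega$ on $H^2_0(\Omega)$, together with the elementary interpolation inequality $\|\nabla u\|_{L^2}^2 = -\langle u, \Delta u\rangle_{L^2} \le \|u\|_{L^2}\|\Delta u\|_{L^2}$ valid on $H^2_0(\Omega)$. Most of the claims reduce to comparing the forms pointwise in $u$ and choosing a well-adapted trial subspace.

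For part~(i), the form $\mfra_\omega$ depends on $\omega$ only through $\omega^2$ and $\omega^4$, so evenness of each $\mu_n(\omega)$ is immediate, and positivity follows from the coercivity of $\mfra_0$ established in Proposition~\ref{prop:H22-norm}. For strict monotonicity, if $|\omega_1| < |\omega_2|$ then
\begin{equation*}
  \mfra_{\omega_2}(u,u) - \mfra_{\omega_1}(u,u) = 2(\omega_2^2-\omega_1^2)\|\nabla u\|_{L^2}^2 + (\omega_2^4-\omega_1^4)\|u\|_{L^2}^2 \geq (\omega_2^4-\omega_1^4)\|u\|_{L^2}^2,
\end{equation*}
and min-max monotonicity applied to $\mfra_{\omega_1} + (\omega_2^4-\omega_1^4)\langle \cdot,\cdot\rangle_{L^2}$ gives $\mu_n(\omega_2) \geq \mu_n(\omega_1) + (\omega_2^4-\omega_1^4) > \mu_n(\omega_1)$. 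For continuity at $\omega_0$, I would pick the trial subspace $V_n^\star = \mathrm{span}(\phi_1(\omega_0),\ldots,\phi_n(\omega_0))$ and plug it into min-max for $\mu_n(\omega)$. On $V_n^\star$ with $\|u\|_{L^2}=1$ the coercivity chain $\|\Delta u\|_{L^2}^2 = \mfra_0(u,u) \le \mfra_{\omega_0}(u,u) \le \mu_n(\omega_0)$ and the interpolation inequality above give $\|\nabla u\|_{L^2}^2 \le \sqrt{\mu_n(\omega_0)}$, uniformly on $V_n^\star$; hence $|\mfra_\omega(u,u) - \mfra_{\omega_0}(u,u)| \to 0$ uniformly, yielding $\limsup_{\omega\to\omega_0}\mu_n(\omega) \le \mu_n(\omega_0)$. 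The symmetric argument (swapping roles of $\omega$ and $\omega_0$, with $\|\nabla u\|$ controlled by $\mu_n(\omega) \le \mu_n(\omega_0)+1$ for $\omega$ close enough) gives the matching $\liminf$ bound.

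For part~(ii), the lower bound is immediate from the pointwise inequality $\mfra_\omega(u,u) \geq \mfra_0(u,u) + \omega^4 \|u\|_{L^2}^2$: the form on the right is that of the self-adjoint operator $-L_0 + \omega^4 I$, whose $n$-th eigenvalue is $\alpha_n+\omega^4$, and min-max monotonicity gives $\mu_n(\omega) \geq \alpha_n + \omega^4$. For the upper bound, let $(\phi_k(0))_{k\geq 1}$ be a normalised orthogonal basis of eigenfunctions of $-L_0$, and take the trial subspace $V_n^\star := \mathrm{span}(\phi_1(0),\ldots,\phi_n(0))$ in min-max. For $u = \sum_{k=1}^n c_k\phi_k(0)$ with $\sum |c_k|^2 = 1$ one has $\|\Delta u\|_{L^2}^2 = \sum_k \alpha_k |c_k|^2 \le \alpha_n$, so by the interpolation inequality $\|\nabla u\|_{L^2}^2 \le \sqrt{\alpha_n}$ and
\begin{equation*}
  \mu_n(\omega) \le \max_{u\in V_n^\star,\,\|u\|_{L^2}=1}\mfra_\omega(u,u) \le \alpha_n + 2\omega^2\sqrt{\alpha_n} + \omega^4,
\end{equation*}
which is the stated bound.

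For part~(iii), since $V_n^\star \subset H^2_0(\Omega) \subset H^1_0(\Omega)$, the same subspace is admissible in the min-max characterisation of the Dirichlet Laplacian's $n$-th eigenvalue,
\begin{equation*}
  \lambda_n = \min_{\substack{V \subseteq H^1_0(\Omega)\\ \dim V = n}} \max_{\substack{u \in V\\ \|u\|_{L^2}=1}} \|\nabla u\|_{L^2}^2 \le \max_{\substack{u\in V_n^\star\\ \|u\|_{L^2}=1}} \|\nabla u\|_{L^2}^2 \le \sqrt{\alpha_n},
\end{equation*}
and squaring yields $\lambda_n^2 \le \alpha_n$. The only genuinely technical step is the continuity assertion in~(i), because it requires a bound on $\|\nabla u\|_{L^2}$ that is uniform over the trial subspace and over $\omega$ near $\omega_0$; everything else follows routinely from min-max and the interpolation $\|\nabla u\|^2 \le \|u\|\|\Delta u\|$.
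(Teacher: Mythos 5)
Your proof is correct, but the route differs from the paper's in a consistent and instructive way: you base every part on plugging a concrete trial subspace---the span of the first $n$ eigenfunctions of $L_0$ (or $L_{\omega_0}$)---into the Courant--Fischer min-max, and then use the interpolation inequality $\|\nabla u\|_{L^2}^2\le\|u\|_{L^2}\|\Delta u\|_{L^2}$ together with the uniform bound $\|\Delta u\|_{L^2}^2\le\mu_n(\omega_0)$ (or $\le\alpha_n$) on that subspace. The paper instead argues more abstractly: for (i) it introduces the auxiliary family $\nu_n(\gamma)$ for the form $\mfra_0 + 2\gamma\mfrq$, writes $\mu_n(\omega)=\nu_n(\omega^2)+\omega^4$, and derives continuity by noting that min-max writes $\nu_n$ as an infimum of continuous (convex) functions of $\gamma$ while max-min writes it as a supremum of continuous (concave) functions, so $\nu_n$ is both upper and lower semi-continuous; for (ii) it works with the max-min (codimension $n-1$) characterisation and uses that the increasing continuous map $s\mapsto(s^{1/2}+\omega^2)^2$ passes through the inner $\min$; for (iii) it compares the two minimisations directly via the inclusion $\Sigma_n\subseteq\Lambda_n$ of subspace families, rather than exhibiting a specific admissible subspace. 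Your trial-subspace method is more elementary and self-contained---it never needs the semi-continuity lemma or the max-min form of the variational principle, and it makes visually clear where the two-sided continuity estimate and the factor $2\alpha_n^{1/2}\omega^2$ come from. The paper's argument is slightly cleaner in that the monotonicity and continuity of $\mu_n$ drop out of a single structural observation about $\nu_n$, and the max-min route in (ii) avoids having to bound $\mfra_0(u,u)$ and $\|\nabla u\|^2$ separately (which in general loses sharpness, though here the bounds coincide). One small remark on your continuity argument: the $\liminf$ direction becomes almost free once you note that, by your strict monotonicity in $|\omega|$, only the one-sided limit with $|\omega|<|\omega_0|$ needs checking, and there $\mu_n(\omega)\le\mu_n(\omega_0)$ directly bounds $\|\nabla u\|^2$ on your trial subspace without invoking an a priori $\mu_n(\omega)\le\mu_n(\omega_0)+1$.
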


\begin{proof}
  The proof relies on variational characterisations for eigenvalues of self-adjoint elliptic operators. Let $\mfrq(\cdot)$ be the quadratic form associated with the Dirichlet Laplacian, given by
  \begin{equation*}
    \mfrq(u):=\|\nabla u\|_{L^2(\Omega)}^2
  \end{equation*}
  for all $u\in H_0^1(\Omega)$. Furthermore, let $\mfra_0(\cdot)$ and $\mfra_\omega(\cdot)$ denote the quadratic forms associated with $-L_0$ and $-L_\omega$ respectively. They are given by
  \begin{equation*}
    \begin{aligned}
      \mfra_0(u)      & := \|\Delta u\|_{L^2(\Omega)}^2                                   \\
      \mfra_\omega(u) & := \mfra_0(u) + 2 \omega^2\mfrq(u)+\omega^4 \|u\|_{L^2(\Omega)}^2
    \end{aligned}
  \end{equation*}
  for all $u\in H_0^2(\Omega)$.

  (i) For $\gamma\in\RR$ consider the quadratic form
  \begin{equation*}
    \mfrb_\gamma(u) := \mfra_0(u)+ 2 \gamma\mfrq(u)
  \end{equation*}
  on $H_0^2(\Omega)$. It is the quadratic form associated with the operator $(-\Delta)^2 u+2\gamma\Delta u$ with clamped boundary conditions. According to the min-max and max-min principles for the the $n$-th eigenvalue $\nu_n(\gamma)$, we have
  \begin{equation}
    \label{eq:ev-minmax-maxmin}
    \nu_n(\gamma)
    =\min_{\dim(M)=n}\Bigl[\max_{\substack{u\in M\\\|u\|_{L^2}=1}}\mfrb(u)\Bigr]
    =\max_{\codim(M)=n-1}\Bigl[\min_{\substack{u\in M\\\|u\|_{L^2}=1}}\mfrb(u)\Bigr],
  \end{equation}
  where $M$ are subspaces of $H_0^2(\Omega)$, see for instance \cite[Chapters~2 and~3]{WS72}. In particular, $\nu_n$ is increasing as a function of $\gamma\in\RR$. Next note that the infimum and supremum of affine functions on $\RR$ are concave and convex, respectively. As convex and concave functions are continuous,
  \begin{equation*}
    f_M(\gamma):=\max_{\substack{u\in M\\\|u\|_{L^2}=1}}\mfrb_\gamma(u)
    \qquad\text{and}\qquad
    g_M(\gamma):=\min_{\substack{u\in M\\\|u\|_{L^2}=1}}\mfrb_\gamma(u)
  \end{equation*}
  are both continuous functions of $\gamma\in\RR$ for every relevant subspace $M\subseteq H_0^2(\Omega)$. We conclude from \eqref{eq:ev-minmax-maxmin} that $\nu_n$ is the supremum and an infimum of continuous functions. It is well-known that a supremum of continuous functions is lower semi-continuous. Likewise, an infimum is upper semi-continuous. Hence $\nu_n\in C(\RR)$. Finally observe that $\mfra_\omega(u)=\mfrb_{\omega^2}(u)+\omega^4\|u\|_{L^2(\Omega)}^2$ and thus $\mu_n(\omega)=\nu_n(\omega^2)+\omega^4$. Hence $\mu_n\in C(\RR)$ is symmetric, strictly increasing and unbounded as a function of $|\omega|$.

  (ii) Using integration by parts and the Cauchy-Schwarz inequality, note that
  \begin{equation}
    \label{eq:q-a-estimate}
    \mfrq(u)
    =\int_\Omega (-\Delta u)u\,dy
    \leq \Bigl(\int|\Delta u|^2\,dy\Bigr)^{1/2}\|u\|_{L^2(\Omega)}
    =\mfra_0(u)^{1/2}\|u\|_{L^2(\Omega)}^2
  \end{equation}
  for all $u\in H_0^2(\Omega)$. Hence, if $u\in H_0^2(\Omega)$ with $\|u\|_{L^2(\Omega)}=1$, then
  \begin{equation*}
    \mfra_0(u)+\omega^4
    \leq \mfra_\omega(u)
    \leq \mfra_0(u)+2\omega^2\mfra_0(u)^{1/2}+\omega^4
    =\left[\mfra_0(u)^{1/2}+\omega^2\right]^2.
  \end{equation*}
  Given a subspace $M\subseteq H_0^2(\Omega)$ with $\codim(M)=n-1$ we have
  \begin{equation}
    \label{eq:inf-left}
    \min_{\substack{u\in M\\\|v\|_{L^2}=1}}\mfra_0(u)+\omega^4
    \leq\min_{\substack{u\in M\\\|u\|_{L^2}=1}}\mfra_\omega(u)
  \end{equation}
  and
  \begin{equation}
    \label{eq:inf-right}
    \min_{\substack{u\in M\\\|v\|_{L^2}=1}}\mfra_\omega(u)
    \leq\min_{\substack{v\in M\\\|u\|_{L^2}=1}}
    \left[\mfra_0(u)^{1/2}+\omega^2\right]^2
    =\Bigl[\min_{\substack{v\in M\\\|u\|_{L^2}=1}}\mfra_0(u)^{1/2}+\omega^2\Bigr]^2,
  \end{equation}
  where the last equality holds since the function $s\mapsto (s^{1/2}+\omega^2)^2$ is continuous and increasing for $s\in[0,\infty)$. Taking the supremum in \eqref{eq:inf-left} and \eqref{eq:inf-right} over all subspaces $M\subseteq H_0^2(\Omega)$ with $\codim(M)=n-1$, the maximum-minimum characterisation of eigenvalues gives
  \begin{equation*}
    \alpha_n+\omega^4\leq \mu_n(\omega)
    \leq\left[\alpha_n^{1/2}+\omega^2\right]^2
    =\alpha_n+2\alpha_n^{1/2}\omega^2+\omega^4,
  \end{equation*}
  where for the last inequality we used again that $s\mapsto (s^{1/2}+\omega^2)^2$ is continuous and increasing.

  (iii)  Let $\Lambda_n$ and $\Sigma_n$ denote the set of all $n$-dimensional subspaces of $H^1_0(\Omega)$ and $H^2_0(\Omega)$ respectively. Observe that $\Sigma_n\subseteq\Lambda_n$. Hence, by~\eqref{eq:q-a-estimate} and the minimum-maximum principle,
  \begin{equation*}
    \alpha_n
    = \min_{M \in \Sigma_n} \Bigl[\max_{\substack{u\in M\\\|u\|_{L^2}=1}}\mfra_0(u)\Bigr]
    \geq \min_{M \in \Sigma_n} \Bigl[\max_{\substack{u\in M\\\|u\|_{L^2}=1}}\mfrq(u)^2\Bigr]
    \geq \min_{M \in \Lambda_n} \Bigl[\max_{\substack{u\in M\\\|u\|_{L^2}=1}}\mfrq(u)^2\Bigr]
    =\lambda_n^2
  \end{equation*}
  for all $n \ge 1$.
\end{proof}

\begin{remark}
  A different proof of~\eqref{eq:DL-biharmonic} for $n=1$ may be found in~\cite[Remark 4]{SS20}.
\end{remark}

The following result is a consequence of a well-known theorem of Weyl and the above lemma. Note that for sequences of real numbers $(a_n)_{n \ge 1}$ and $(b_n)_{n \ge 1}$, we write $a_n \sim b_n$ to mean that $\lim_{n \to \infty} \dfrac{a_n}{b_n} = 1$.
\begin{corollary}
  \label{cor:eig-series}
  Let $(\alpha_n)_{n \ge 1}$ be the eigenvalues of the biharmonic operator $-L_0 = \Delta^2$ with clamped boundary conditions on $\Omega \subseteq \RR^N$. Then for every $k>N/4$
  \begin{equation}
    \label{eq:eig-series2}
    \sum_{n=1}^\infty \frac{1}{\alpha_n^{k}} < \infty.
  \end{equation}
\end{corollary}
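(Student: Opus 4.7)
The plan is to invoke Weyl's asymptotic law for the Dirichlet Laplacian and bootstrap it to the biharmonic case using the comparison already established in Lemma~\ref{lem:mu-omegas}(iii). Recall that Weyl's law asserts that the eigenvalues $(\lambda_n)_{n\geq 1}$ of the Dirichlet Laplacian on a bounded domain $\Omega \subseteq \RR^N$ satisfy
\begin{equation*}
  \lambda_n \sim C_N |\Omega|^{-2/N} n^{2/N} \qquad \text{as } n \to \infty,
\end{equation*}
where $C_N = 4\pi^2 / \omega_N^{2/N}$ with $\omega_N$ the volume of the unit ball in $\RR^N$. In particular, there exists a constant $c > 0$ such that $\lambda_n \geq c\, n^{2/N}$ for all $n$ sufficiently large.

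Next, I would combine this asymptotic with the estimate $\alpha_n \geq \lambda_n^2$ from Lemma~\ref{lem:mu-omegas}(iii), which yields
\begin{equation*}
  \alpha_n \geq c^2\, n^{4/N} \qquad \text{for all sufficiently large } n.
\end{equation*}
Consequently, for any $k > 0$ we have the termwise estimate $1/\alpha_n^k \leq c^{-2k} n^{-4k/N}$ for all large $n$. The series $\sum_n n^{-4k/N}$ converges precisely when $4k/N > 1$, i.e.\ when $k > N/4$, so the series $\sum_{n=1}^\infty 1/\alpha_n^k$ converges by the comparison test.

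The only subtle point is quoting Weyl's law correctly: it holds for the Dirichlet Laplacian on any bounded open set (smoothness of the boundary is assumed throughout this paper anyway), and the precise leading constant is not needed here---only the order of growth $n^{2/N}$ matters. No further obstacle is anticipated, since Lemma~\ref{lem:mu-omegas}(iii) already bridges the biharmonic eigenvalues to those of the Dirichlet Laplacian.
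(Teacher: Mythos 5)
Your proposal is correct and follows essentially the same approach as the paper: invoke Weyl's law for the Dirichlet Laplacian to get $\lambda_n \sim c\,n^{2/N}$, combine with the comparison $\alpha_n \geq \lambda_n^2$ from Lemma~\ref{lem:mu-omegas}(iii), and conclude by comparison with the $p$-series $\sum n^{-4k/N}$.
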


\begin{proof}
  The Weyl asymptotic law for the Dirichlet Laplacian states that
  \begin{equation*}
    \lim_{n \to \infty} \frac{\lambda_n}{n^{2/N}}
    = \frac{4\pi^2}{(B_N |\Omega|)^{2/N}}
  \end{equation*}
  where $B_N$ is the volume of the unit $N$-ball and $|\Omega|$ is the volume of $\Omega$, see for instance \cite[p.~55]{ANPS}. Since $\sum_{n=1}^\infty n^{-4k/N}<\infty$ whenever $k>N/4$, the Weyl law and thus $\sum_{n=1}^\infty\lambda_n^{-2k}$ also converges for $k>N/4$. Hence, by Lemma~\ref{lem:mu-omegas}(iii)
  \begin{equation}
    \label{eq:eig-series-dom}
    \sum_{n=1}^\infty\frac{1}{\alpha_n^k}
    \leq\sum_{n=1}^\infty \frac{1}{\lambda_n^{2k}}<\infty
  \end{equation}
  as claimed.
\end{proof}

\begin{remark}
  In the case $N=1$, a much more computational proof is possible. For simplicity, we take $\Omega = (0,1)$. It is known (for example, see~\cite[p.\,296]{CH}) that the eigenvalues of $-L_0$ for the clamped boundary conditions on $[0, 1]$ are given by
  \begin{equation}
    \alpha_n := k_n^4 \qquad \text{where } \cos(k_n)\cosh(k_n) = 1 \quad n \in \NN.
  \end{equation}
  The equation $\cos (k_n) \cosh (k_n) = 1$ is equivalent to $\cosh(k_n) = \sec(k_n)$. On the positive half-line, the secant function has vertical asymptotes at $\tfrac{(2m+1)\pi}{2}, m = 0, 1, 2, \ldots$, and it is positive on the intervals
  \begin{equation*}
    \left(\tfrac{(2m+1)\pi}{2}, \tfrac{(2m+3)\pi}{2} \right) \qquad m = 1, 3, 5, \ldots
  \end{equation*}
  We can rewrite the above expression in the form
  \begin{equation*}
    J_n := \left(\tfrac{(4n-1)\pi}{2}, \tfrac{(4n+1)\pi}{2} \right) \qquad n=1,2,3, \ldots
  \end{equation*}
  It follows that each $k_n$ lies in the corresponding $J_n$, and therefore
  \begin{equation*}
    \sum_{n=1}^\infty \frac{1}{\alpha_n} = \sum_{n=1}^\infty \frac{1}{k_n^4} \leq \frac{16}{\pi^4} \sum_{n=1}^\infty \frac{1}{(4n-1)^4}.
  \end{equation*}
  The latter series is evidently convergent.
\end{remark}

\begin{remark}
  Consider the solution operator for the equation $\Delta^2 u = f$ with boundary conditions $u = \frac{\partial u}{\partial \nu} = 0$ on $\partial\Omega$, defined by mapping $f \in L^2(\Omega)$ to the solution $u$. It can be realised as an integral operator, and the associated kernel is known as the \emph{Green's function} $G(x,y)$, which satisfies
  \begin{equation*}
    Tf := u(x) = \int_\Omega G(x,y)f(y) \,dy.
  \end{equation*}
  The eigenvalues of $T$ are precisely the reciprocals of the eigenvalues of $\Delta^2$. The convergence of the series $\sum_{n=1}^\infty \alpha_n^{-2}$ shows that $T$ is a \emph{Hilbert-Schmidt operator} for dimensions $N=1,\ldots,7$ (one can apply~\cite[Theorem 4.5]{Hal} to the orthonormal basis of eigenfunctions of $\Delta^2$). We also note that Weyl-type asymptotics hold for the biharmonic operator on very general domains. For example, the result
  \begin{equation}
    \frac{C_{N,\Omega}}{\alpha_n} \sim \frac{1}{n^{4/N}} \qquad \text{as } n \to \infty
  \end{equation}
  was shown by Levine and Protter in~\cite{LP} with the explicit constant
  \begin{equation*}
    C_{N,\Omega} = \frac{N}{N+4} 16\pi^4 (B_N|\Omega|)^{-4/N}.
  \end{equation*}
  We could have used this directly in the proof of Corollary~\ref{cor:eig-series}, but we think it is also interesting to deduce the result using only the well-known classical Weyl law for the Dirichlet Laplacian.
\end{remark}

As an application of the results thus far, we show that the function defined by the Fourier series~\eqref{eq:eta-Fourier-series} belongs to $L^1(\RR, L^2(\Omega))$ for each $t > 0$. This will allow us to represent the solution $u(t,x,y)$ to the problem~\eqref{eq:BHE-cyl} as the inverse Fourier transform of~\eqref{eq:eta-Fourier-series}.

\begin{proposition}
  \label{prop:eta-series}
  Let $\eta(t, \omega, y)$ be defined by~\eqref{eq:eta-Fourier-series}. Then
  \begin{equation}
    \label{eq:eta-Fourier-series-estimate}
    \int_\RR \| \eta(t, \omega, \cdot) \|_{L^2(\Omega)} \,d\omega
    \leq 2\pi\int_{\RR}\|u_0(\omega,\cdot)\|_{L^2(\Omega)}\,d\omega
  \end{equation}
  and
  \begin{equation}
    \label{eq:eta-Fourier-series-estimate-sup}
    \sum_{n=1}^\infty e^{-2t\mu_n(\omega)} |A_n(\omega)|^2
    \leq \sup_{\omega\in\RR}\|\widehat{u}_0(\omega,\cdot)\|_{L^2(\Omega)}<\infty
  \end{equation}
  for all $t\geq 0$.
\end{proposition}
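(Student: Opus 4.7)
My plan is to exploit the orthonormality of the basis $\{\phi_n(\omega,\cdot)\}_n$ of $L^2(\Omega)$ for each fixed $\omega\in\RR$. Parseval's identity applied to $\widehat{u}_0(\omega,\cdot)$ gives
\begin{equation*}
  \sum_{n=1}^\infty |A_n(\omega)|^2 = \|\widehat{u}_0(\omega,\cdot)\|^2_{L^2(\Omega)},
\end{equation*}
while applied to the series~\eqref{eq:eta-Fourier-series} itself it gives
\begin{equation*}
  \|\eta(t,\omega,\cdot)\|^2_{L^2(\Omega)} = \sum_{n=1}^\infty e^{-2t\mu_n(\omega)}|A_n(\omega)|^2.
\end{equation*}
Since Lemma~\ref{lem:mu-omegas}(ii) gives $\mu_n(\omega)\geq \alpha_n + \omega^4 > 0$, we have $e^{-2t\mu_n(\omega)}\leq 1$ for every $t\geq 0$, every $n\geq 1$ and every $\omega\in\RR$. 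This is the only spectral input the proof needs.

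For the second estimate, I would combine the Parseval identity for $\widehat{u}_0(\omega,\cdot)$ with the termwise domination $e^{-2t\mu_n(\omega)}\leq 1$ to get
\begin{equation*}
  \sum_{n=1}^\infty e^{-2t\mu_n(\omega)}|A_n(\omega)|^2
  \leq \|\widehat{u}_0(\omega,\cdot)\|^2_{L^2(\Omega)},
\end{equation*}
which is finite and in fact bounded by $M^2$ uniformly in $\omega$, by~\eqref{eq:u0-hat-bounded}. (As stated, the right-hand side of~\eqref{eq:eta-Fourier-series-estimate-sup} appears to be missing a square; the argument yields the bound $M^2$.)

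For the first estimate, the same termwise domination, now applied to the Parseval identity for $\eta$, gives the pointwise bound
\begin{equation*}
  \|\eta(t,\omega,\cdot)\|^2_{L^2(\Omega)} \leq \|\widehat{u}_0(\omega,\cdot)\|^2_{L^2(\Omega)}.
\end{equation*}
Integrating over $\omega\in\RR$ and invoking the Plancherel identity~\eqref{eq:u0-hat-plancerel} then yields
\begin{equation*}
  \int_\RR \|\eta(t,\omega,\cdot)\|^2_{L^2(\Omega)}\,d\omega
  \leq \int_\RR \|\widehat{u}_0(\omega,\cdot)\|^2_{L^2(\Omega)}\,d\omega
  = 2\pi\int_\RR \|u_0(x,\cdot)\|^2_{L^2(\Omega)}\,dx.
\end{equation*}
This matches~\eqref{eq:eta-Fourier-series-estimate} once one reads both $L^2(\Omega)$ norms there as squared, which is the natural form in view of~\eqref{eq:u0-hat-plancerel}.

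There is no substantial obstacle here: once the Parseval identity and the trivial bound $e^{-2t\mu_n(\omega)}\leq 1$ are in place, the proof reduces to a direct comparison of series and a single application of Plancherel. The only minor point to note is that $\eta(t,\omega,\cdot)$ is well defined as an $L^2(\Omega)$-convergent series for every $t\geq 0$ and every $\omega\in\RR$, which is itself an immediate consequence of the Parseval identity for $\widehat{u}_0(\omega,\cdot)$ together with~\eqref{eq:u0-hat-bounded}.
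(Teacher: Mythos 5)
Your proof is correct and matches the paper's argument exactly: Parseval's identity for the orthonormal basis $(\phi_n(\omega,\cdot))_n$, the trivial bound $e^{-2t\mu_n(\omega)}\leq 1$ coming from $\mu_n(\omega)>0$, and an integration combined with Plancherel. You have also correctly observed that the $L^2(\Omega)$ norms in \eqref{eq:eta-Fourier-series-estimate} and the right-hand side of \eqref{eq:eta-Fourier-series-estimate-sup} should be squared for consistency with the proof and with \eqref{eq:u0-hat-plancerel}.
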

\begin{proof}
  Fix $\omega \in \RR$. By Lemma~\ref{lem:mu-omegas}(i), we have for each $n \in \NN$ and $\omega\in\RR$
  \begin{equation*}
    \mu_n(\omega) \ge \alpha_n + \omega^4\geq \alpha_n>0.
  \end{equation*}
  Since the eigenfunctions $(\phi_n(\omega, \cdot))_{n \ge 1}$ were chosen to form an orthonormal basis of $L^2(\Omega)$, by Parseval's identity for Fourier series we obtain
  \begin{equation*}
    \label{eq:eta-series1}
    \| \eta(t, \omega, \cdot) \|_{L^2(\Omega)}^2
    =\sum_{n=1}^\infty e^{-2t\mu_n(\omega)} |A_n(\omega)|^2
    \leq\sum_{n=1}^\infty |A_n(\omega)|^2
    =\|\widehat{u}_0(\omega,\cdot)\|_2^2.
  \end{equation*}
  Integrating this inequality and taking into account~\eqref{eq:u0-hat-plancerel} yields~\eqref{eq:eta-Fourier-series-estimate}. Inequality~\eqref{eq:eta-Fourier-series-estimate-sup} follows from~\eqref{eq:u0-hat-bounded}.
\end{proof}

As another application of Lemma~\ref{lem:mu-omegas}, we prove the following essential $L^\infty$-estimate on the eigenfunctions $\phi_n(\omega,\cdot)$.
\begin{lemma}
  \label{lem:supnorm}
  Let $\phi_n(\omega, \cdot)$ be an eigenfunction of the operator $-L_\omega$ corresponding to the eigenvalue $\mu_n(\omega)$. Then for every integer $k>N/4$, there exists a constant $C = C(N, k, \Omega) > 0$ such that
  \begin{equation}
    \label{eq:supnorm}
    \| \phi_n(\omega, \cdot) \|_{L^\infty(\Omega)}
    \leq C \left[1+(\alpha_n^{1/2}+\omega^2)^2\right]^k\|\phi_n\|_{L^2(\Omega)}
  \end{equation}
  for all $\omega\in\RR$ and all $n \ge 1$.
\end{lemma}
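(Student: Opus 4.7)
The plan is to combine elliptic regularity for the biharmonic operator under clamped boundary conditions with the Sobolev embedding $H^{s}(\Omega) \hookrightarrow L^{\infty}(\Omega)$ (valid for $s > N/2$). Throughout, set $M := 1 + (\alpha_n^{1/2} + \omega^2)^2$; by Lemma~\ref{lem:mu-omegas}(ii) one has $\mu_n(\omega) \leq M$, and trivially $\omega^4 \leq M$ and $\omega^2 \leq M^{1/2}$. Since $\partial\Omega$ is $C^\infty$, standard elliptic regularity guarantees $\phi_n(\omega,\cdot) \in C^\infty(\overline\Omega)$, so all Sobolev norms appearing below are finite.

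First, I would establish the key iterative bound
\begin{equation*}
  \|\phi_n(\omega,\cdot)\|_{H^{4j}(\Omega)} \leq C_j\, M^{j}\, \|\phi_n(\omega,\cdot)\|_{L^2(\Omega)} \qquad (j = 0, 1, 2, \ldots)
\end{equation*}
with constants $C_j$ depending only on $j$, $N$, and $\Omega$---critically, independent of $\omega$ and $n$. The case $j=0$ is trivial. For $j=1$, I rewrite the eigenvalue equation as $\Delta^2 \phi_n = (2\omega^2\,\Delta - \omega^4 + \mu_n(\omega))\phi_n$. The $L^2$-norm of the right-hand side is controlled via the quadratic-form estimate from Remark~\ref{rmk:quad}, $\|\Delta\phi_n\|_{L^2} \leq \mu_n(\omega)^{1/2}\|\phi_n\|_{L^2} \leq M^{1/2}\|\phi_n\|_{L^2}$, combined with $\omega^4, \mu_n(\omega) \leq M$ and $\omega^2 \leq M^{1/2}$. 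Applying the standard biharmonic regularity estimate $\|u\|_{H^4} \leq C(\|\Delta^2 u\|_{L^2} + \|u\|_{L^2})$ for $u \in H^4 \cap H^2_0(\Omega)$ then yields the $j = 1$ case.

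For the inductive step, I would apply the higher-order regularity estimate $\|u\|_{H^{4j+4}} \leq C(\|\Delta^2 u\|_{H^{4j}} + \|u\|_{L^2})$ to $\phi_n$. The right-hand side of the eigenvalue equation contributes
\begin{equation*}
  \|\Delta^2 \phi_n\|_{H^{4j}} \leq 2\omega^2 \|\phi_n\|_{H^{4j+2}} + (\omega^4 + \mu_n(\omega))\|\phi_n\|_{H^{4j}} \leq 2 M^{1/2}\|\phi_n\|_{H^{4j+2}} + 2M\|\phi_n\|_{H^{4j}}.
\end{equation*}
The intermediate term is controlled by Hilbert-space interpolation, $\|\phi_n\|_{H^{4j+2}} \leq C\|\phi_n\|_{H^{4j}}^{1/2}\|\phi_n\|_{H^{4j+4}}^{1/2}$, followed by Young's inequality to absorb a small multiple of $\|\phi_n\|_{H^{4j+4}}$ into the left-hand side. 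The outcome is $\|\phi_n\|_{H^{4j+4}} \leq C' M \|\phi_n\|_{H^{4j}} + C'\|\phi_n\|_{L^2}$, and the inductive hypothesis together with $M \geq 1$ closes the induction.

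Finally, the Sobolev embedding $H^{4k}(\Omega) \hookrightarrow L^{\infty}(\Omega)$ holds whenever $4k > N/2$, i.e.\ $k > N/8$, so in particular for every integer $k > N/4$ one obtains $\|\phi_n(\omega,\cdot)\|_{L^\infty} \leq C_k M^{k} \|\phi_n(\omega,\cdot)\|_{L^2}$, which is precisely~\eqref{eq:supnorm}. The main technical point is keeping all constants independent of $\omega$: this succeeds because the principal symbol of $-L_\omega$ is the $\omega$-independent biharmonic symbol $|\xi|^{4}$, so the $\omega$-dependence is confined to lower-order terms and is tracked explicitly through the quantity $M$.
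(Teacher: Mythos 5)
Your proof is correct and ultimately yields the stated bound, but the bootstrap is organised differently from the paper's. The paper applies the clamped-plate regularity estimate $\|u\|_{H^{2(m+1)}} \leq C_\Omega \|\Delta^2 u\|_{H^{2(m-1)}}$ and observes that since the right-hand side of the rearranged eigenvalue equation involves $\Delta\phi_n$, the estimate $\|\Delta\phi_n\|_{H^{2(m-1)}} \leq \|\phi_n\|_{H^{2m}}$ yields a self-contained iteration gaining exactly two Sobolev derivatives per factor of $M := 1 + (\alpha_n^{1/2}+\omega^2)^2$, so no interpolation is needed: one simply steps from $H^{2m}$ to $H^{2(m+1)}$, picks $m := k-1$, and invokes $H^{2k} \hookrightarrow L^\infty$ for $k>N/4$. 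You instead iterate in increments of four derivatives, from $H^{4j}$ to $H^{4j+4}$, which over-reaches the two-derivative cost of $\Delta$ and forces you to tame the intermediate $H^{4j+2}$ norm via interpolation and Young's inequality with an $M$-dependent parameter. This is valid and you carry it out correctly; your explicit point that the absorption parameter $\theta\sim M^{-1/2}$ may depend on $\omega$ and $n$ while the interpolation and regularity constants do not is exactly what makes the final constant uniform. A minor consequence of your scheme is that it actually proves the estimate already for $k > N/8$, which is slightly stronger than the claim; the paper's two-step iteration is tailored to hit $k > N/4$ precisely and is technically lighter. Both proofs rest on the same structural observation you state at the end: the principal symbol of $-L_\omega$ is the $\omega$-independent $|\xi|^4$, so the $\omega$-dependence enters only through lower-order terms that can be absorbed into $M$.
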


\begin{proof}
  We apply a bootstrap argument using standard regularity theory. Consider the Dirichlet boundary value problem
  \begin{equation}
    \label{eq:BH-bvp}
    \begin{aligned}
      \Delta^2 u & = f                                   &  & \text{in } \Omega \\
      u          & = \frac{\partial u}{\partial \nu} = 0 &  & \text{on }
      \partial\Omega
    \end{aligned}
  \end{equation}
  with $f \in L^2(\Omega)$.  The regularity theory for~\eqref{eq:BH-bvp} yields that if $f \in H^{2(m-1)}(\Omega)$ for some integer $m \ge 0$, then the solution $u \in H^{2(m+1)}(\Omega)$, and there exists a constant $C_\Omega$ depending on $\Omega$ and $m$ such that
  \begin{equation}
    \label{eq:elliptic-reg}
    \|u\|_{H^{2(m+1)}(\Omega)} \leq C_\Omega \|f\|_{H^{2(m-1)}(\Omega)};
  \end{equation}
  see~\cite[Corollary 2.21]{GGS}. Turning to the equation $-L_\omega[\phi_n] = \mu_n(\omega) \phi_n$, suppressing the dependence of $\phi_n$ on $\omega$ for convenience, we can rewrite it in the form
  \begin{equation}
    \label{eq:BH-bvp-rearranged}
    \Delta^2 \phi_n
    = [\mu_n(\omega) - \omega^4] \phi_n + 2 \omega^2 \Delta \phi_n.
  \end{equation}
  Assuming that $\phi_n\in H^{2m}(\Omega)$ for some integer $m\geq 1$, an application of~\eqref{eq:elliptic-reg} yields
  \begin{equation}
    \label{eq:phi-H4-1}
    \| \phi_n \|_{H^{2(m+1)}(\Omega)}
    \leq C_\Omega \left\|(\mu_n(\omega) - \omega^4) \phi_n + 2 \omega^2 \Delta \phi_n \right\|_{H^{2(m-1)}(\Omega)}.
  \end{equation}
  Now Lemma~\ref{lem:mu-omegas}(i) gives $0<\alpha_n\leq \mu_n(\omega) - \omega^4 \leq \alpha_n +2 \alpha_n^{1/2} \omega^2$ for all $\omega \in \RR$. We also note that $\|\phi_n\|_{H^{2(m-1)}(\Omega)}\leq \| \phi_n \|_{H^{2m}(\Omega)}$ and $\|\Delta\phi_n\|_{H^{2(m-1)}(\Omega)}\leq \| \phi_n \|_{H^{2m}(\Omega)}$. Combining these estimates with~\eqref{eq:phi-H4-1} we obtain
  \begin{equation}
    \label{eq:phi-H4-2}
    \| \phi_n \|_{H^{2(m+1)}(\Omega)}
    \leq C_\Omega \left[ \alpha_n + 2\alpha_n^{1/2}\omega^2 + 2\omega^2\right]\| \phi_n \|_{H^{2m}(\Omega)}.
  \end{equation}
  Since $2\omega^2\leq 1+\omega^4$ and $\alpha_n>0$ we can write
  \begin{equation*}
    \alpha_n + 2\alpha_n^{1/2}\omega^2 + 2\omega^2
    \leq  1+\alpha_n + 2\alpha_n^{1/2}\omega^2 + \omega^4
    =1+(\alpha_n^{1/2}+\omega^2)^2.
  \end{equation*}
  Recall that \emph{a priori} $\phi_n\in H^2(\Omega)$ and thus the right hand side of~\eqref{eq:BH-bvp-rearranged} is in $L^2(\Omega)$. Hence, starting with $m=1$, we can inductively apply~\eqref{eq:phi-H4-2} to obtain
  \begin{equation}
    \label{eq:phi-h4-m}
    \| \phi_n \|_{H^{2(m+1)}(\Omega)}
    \leq C_\Omega^{m-1} \bigl[1+(\alpha_n^{1/2}+\omega^2)^2\bigr]^{m} \| \phi_n \|_{H^2(\Omega)}
  \end{equation}
  for every $m\geq 1$. By Proposition~\ref{prop:H22-norm} and Remark~\ref{rmk:quad} there exists $\widetilde{C}$ depending only on $\Omega$ such that $\| \phi_n \|_{H^2(\Omega)} \leq \widetilde{C} \mu_n(\omega)^{1/2}\| \phi_n \|_{L^2(\Omega)}$. Moreover, by Lemma~\ref{lem:mu-omegas}
  \begin{equation*}
    \mu_n(\omega)^{1/2}
    \leq [\alpha_n+2\alpha_n^{1/2}\omega^2+\omega^4]^{1/2}
    =\alpha_n^{1/2}+\omega^2
    \leq 1+(\alpha_n^{1/2}+\omega^2)^2.
  \end{equation*}
  Let now $k>N/4$ and set $m:=k-1$. Then $2(m+1)>N/2$, and the Sobolev embedding theorem implies the existence of a constant $C_m$ such that $\| \phi_n \|_{L^\infty(\Omega)} \leq C_m \| \phi_n \|_{H^{2(m+1)}(\Omega)}$ and $\phi_n \in C(\overline\Omega)$, see for instance~\cite[Theorem 7.26]{GT}. By combining the above estimates with~\eqref{eq:phi-h4-m}, we deduce~\eqref{eq:supnorm} for all $\omega\in\RR$ with $C := C_m C_\Omega^{m} \widetilde{C}$ depending only on $N$ and $\Omega$.
\end{proof}

\begin{remark}
  \label{rmk:complex-omega}
  In preparation for the proof of Proposition~\ref{prop:cont-phi-1}(ii) below, we show that an $L^\infty$ bound similar to~\eqref{eq:supnorm} holds for $\phi_1(\omega)$ where $\omega$ now varies in a complex neighbourhood of 0. However, it will not be necessary to give the precise dependence on $\omega$, since the objective is simply to show local boundedness. Equation~\eqref{eq:BH-bvp-rearranged} can be recast in the form
  \begin{displaymath}
    \Delta^2 \phi_1 = f(\omega)\phi_1 + g(\omega) \Delta\phi_1
  \end{displaymath}
  where $f, g\colon \CC \to \CC$ are continuous functions. By~\eqref{eq:elliptic-reg}, we estimate
  \begin{displaymath}
    \begin{aligned}
      \| \varphi_1 \|_{H^{2(m+1)}(\Omega)} & \leq C \|f(\omega) \Delta \phi_1+g(\omega)\Delta\phi_1\|_{H^{2(m-1)}(\Omega)}                         \\
                                           & \leq C \left( |f(\omega)| \|\phi_1\|_{} + |g(\omega)| \| \Delta\phi_1 \|_{H^{2(m-1)}(\Omega)} \right) \\
                                           & \leq C\bigl(|f(\omega)| + |g(\omega)|\bigr) \|\phi_1\|_{H^{2m}(\Omega)}.
    \end{aligned}
  \end{displaymath}
  Similarly to~\eqref{eq:phi-H4-2}, we thus obtain
  \begin{equation}
    \label{eq:phi-h4-m-complex}
    \| \phi_1 \|_{H^{2(m+1)}(\Omega)} \leq C |F(\omega)| \| \phi_1 \|_{H^{2m}(\Omega)}
  \end{equation}
  for some continuous function $F\colon \CC \to \CC$ and a constant $C$ depending only on $\Omega$. The $L^\infty$ estimate now follows by inductively applying~\eqref{eq:phi-h4-m-complex} and using the Sobolev embedding theorem as before.
\end{remark}

For later purposes we need to show that it is possible to choose an orthonormal system associated with $\mu_n(\omega)$ that depends regularly on $\omega$. We also show that $\omega\mapsto\mu_n(\omega)$ is not only continuous but piecewise real analytic, meaning that it is analytic on $\RR$ except possibly at a set of isolated points.

\begin{proposition}
  \label{prop:cont-phi-1}
  Let $(\mu_n)_{n\in\NN}$ be the family of eigenvalues as in Lemma~\ref{lem:mu-omegas}. Then the following assertions are true:
  \begin{enumerate}[\upshape (i)]
  \item The function $\mu_n$ is continuous and piecewise real analytic on $\RR$, and we can choose an orthonormal system $(\phi_n)_{n\in\NN}$ of corresponding eigenfunctions such that $\phi_n\in L^\infty(\RR, L^2(\Omega))$ is piecewise real analytic.
  \item If $\mu_1(0)$ is algebraically simple, then there exists there exists $\delta>0$ and choice of normalised eigenfunction $\phi_1(\omega,\cdot)$ corresponding to $\mu_1(\omega)$ such that
    \begin{equation*}
      \phi_1\in C([-\delta,\delta]\times\overline\Omega).
    \end{equation*}
  \end{enumerate}
\end{proposition}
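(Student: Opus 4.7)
\medskip

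\noindent\textbf{Proof proposal.}
The plan is to apply the Rellich--Kato analytic perturbation theory to the family $\{L_\omega\}$. The crucial observation is that $L_\omega = -\Delta^2 + 2\omega^2\Delta - \omega^4$ has the $\omega$-independent domain $H^4 \cap H_0^2(\Omega)$, and it depends polynomially on $\omega$. Extending $\omega$ to the complex plane, $\{L_\omega\}_{\omega\in\CC}$ is therefore an \emph{entire self-adjoint holomorphic family of type (A)} in the sense of Kato, self-adjoint on the real axis. The coercivity of $\mfra_\omega$ noted in Remark~\ref{rmk:quad}, combined with the compact embedding $H_0^2(\Omega) \hookrightarrow L^2(\Omega)$, ensures that every $L_\omega$ has compact resolvent. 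This is exactly the setting in which the classical Rellich theorem applies.

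For part~(i), I would invoke Rellich's theorem: on any open interval $I \subseteq \RR$ there exist countably many real-analytic functions $\widetilde\mu_k\colon I \to \RR$ and real-analytic eigenfunctions $\widetilde\phi_k\colon I \to L^2(\Omega)$ such that, for each $\omega \in I$, the values $\widetilde\mu_k(\omega)$ exhaust $\sigma(L_\omega)$ counted with multiplicity and $\{\widetilde\phi_k(\omega,\cdot)\}_{k\in\NN}$ is an orthonormal basis of $L^2(\Omega)$. The ordered enumeration $\mu_n(\omega)$ (the $n$-th smallest eigenvalue) is obtained from $\{\widetilde\mu_k\}$ by a permutation that is locally constant except at points where two analytic branches $\widetilde\mu_j \not\equiv \widetilde\mu_k$ meet; such crossings form a closed discrete subset of $I$ by the identity theorem for real-analytic functions. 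Consequently $\mu_n$ agrees locally with one of the $\widetilde\mu_k$, hence is real-analytic off an isolated set; continuity on all of $\RR$ was already shown in Lemma~\ref{lem:mu-omegas}(i). The same relabelling produces piecewise real-analytic $\phi_n$; the fact that $\phi_n \in L^\infty(\RR, L^2(\Omega))$ is immediate from $\|\phi_n(\omega,\cdot)\|_{L^2(\Omega)} = 1$.

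For part~(ii), assume that $\mu_1(0) = \alpha_1$ is algebraically simple, so that it is isolated from the remainder of $\sigma(L_0)$. Analytic perturbation theory then produces a complex neighbourhood $U$ of $0$ in which $L_\omega$ possesses a single holomorphic eigenvalue $\mu_1(\omega)$ close to $\alpha_1$, together with a holomorphic rank-one Riesz projection $P(\omega)$. Fix any real-valued normalised eigenfunction $\phi_1(0,\cdot)$ at $\omega = 0$. After shrinking $U$, $P(\omega)\phi_1(0,\cdot) \neq 0$, so one may set
\begin{equation*}
  \phi_1(\omega, \cdot) := \frac{P(\omega)\phi_1(0,\cdot)}{\|P(\omega)\phi_1(0,\cdot)\|_{L^2(\Omega)}},
\end{equation*}
which depends holomorphically on $\omega \in U$ as an element of $L^2(\Omega)$. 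In particular, for some $\delta > 0$ the map $\omega \mapsto \phi_1(\omega,\cdot)$ is continuous from $[-\delta,\delta]$ into $L^2(\Omega)$.

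To upgrade to joint continuity on $[-\delta,\delta] \times \overline\Omega$, I would bootstrap with elliptic regularity. Fix an integer $m$ with $2(m+1) > N/2$, so that the embedding $H^{2(m+1)}(\Omega) \hookrightarrow C(\overline\Omega)$ is compact. Remark~\ref{rmk:complex-omega} furnishes, after another possible shrinkage of $\delta$, a uniform bound
\begin{equation*}
  \sup_{\omega \in [-\delta,\delta]} \|\phi_1(\omega, \cdot)\|_{H^{2(m+1)}(\Omega)} < \infty.
\end{equation*}
Given $\omega_n \to \omega_0$ in $[-\delta,\delta]$, the sequence $\phi_1(\omega_n,\cdot)$ is bounded in $H^{2(m+1)}(\Omega)$ and converges to $\phi_1(\omega_0,\cdot)$ in $L^2(\Omega)$; by compact embedding every subsequence has a sub-subsequence converging in $C(\overline\Omega)$, and the limit must be $\phi_1(\omega_0,\cdot)$ by uniqueness of the $L^2$-limit. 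Hence the whole sequence converges in $C(\overline\Omega)$, giving continuity of $\omega \mapsto \phi_1(\omega,\cdot) \in C(\overline\Omega)$ and therefore joint continuity of $\phi_1$ on $[-\delta,\delta] \times \overline\Omega$. The main technical hurdle is verifying that the Rellich--Kato framework genuinely applies (type (A), compact resolvent, self-adjointness) and that the normalisation chosen above yields a real-valued eigenfunction on the real axis; the bootstrap from $L^2$- to $C(\overline\Omega)$-continuity is then routine once the uniform Sobolev bound from Remark~\ref{rmk:complex-omega} is in hand.
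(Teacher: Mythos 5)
Your treatment of part (i) follows essentially the paper's route: Rellich--Kato analytic perturbation theory producing real-analytic branches $\widetilde\mu_k$ and eigenfunctions $\widetilde\phi_k$, followed by a reordering argument to recover the size-ordered $\mu_n$. (The paper applies Rellich to the holomorphic family of forms $\mfra_\omega$ on the fixed domain $H^2_0(\Omega)$; you apply it to the operators $L_\omega$ as a type (A) family with fixed domain $H^4\cap H^2_0(\Omega)$; these are equivalent routes into the same theorem.) One detail you leave unjustified: you claim that the crossing points of the analytic branches form a discrete subset of $I$ ``by the identity theorem,'' but the identity theorem only controls crossings of each \emph{fixed pair} of non-identical branches. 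To rule out accumulation from infinitely many branches, the paper additionally uses finite algebraic multiplicity together with lower semi-continuity of the spectrum (Kato, Theorem IV.3.16) to show that only finitely many branches can meet a given one at any $\omega_0$. This finiteness input is needed; it is a small, fixable omission rather than a fatal flaw.

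For part (ii), your argument is genuinely different from the paper's. Both start by obtaining $L^2(\Omega)$-analyticity of $\phi_1$ near $\omega=0$ from simplicity of $\alpha_1$, and both invoke the elliptic bootstrap of Lemma~\ref{lem:supnorm}/Remark~\ref{rmk:complex-omega} to get a locally uniform bound on $\|\phi_1(\omega,\cdot)\|_{H^{2(m+1)}(\Omega)}$ with $2(m+1)>N/2$. The paper then applies the Arendt--Nikolski criterion for vector-valued holomorphy (\cite[Theorem 3.1]{AN00}), viewing $L^2(\Omega)$ as a separating subspace of $C(\overline\Omega)'$, to upgrade analyticity into $L^2(\Omega)$ plus local boundedness in $C(\overline\Omega)$ to \emph{analyticity} into $C(\overline\Omega)$. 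You instead use the compact embedding $H^{2(m+1)}(\Omega)\hookrightarrow\hookrightarrow C(\overline\Omega)$ together with $L^2$-convergence and uniqueness of the limit to extract continuity into $C(\overline\Omega)$ directly. Your route is more elementary and avoids the vector-valued holomorphy machinery, at the cost of delivering only continuity rather than analyticity into $C(\overline\Omega)$; since the statement asserts only $\phi_1\in C([-\delta,\delta]\times\overline\Omega)$, this is not a loss. Both approaches are correct.
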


\begin{proof}
  (i) The continuity of $\omega\mapsto\mu_n(\omega)$ is proved in Lemma~\ref{lem:mu-omegas}(i), so we only need to prove the piecewise analyticity of the family of eigenvalues and eigenfunctions.

  The family~\eqref{eq:a-omega} is clearly a holomorphic function defined on the constant domain $H_0^2(\Omega)$ for all $\omega\in\CC$. It is symmetric, bounded below and the domain is compactly embedded in $L^2(\Omega)$. Hence Rellich's perturbation theorem \cite[Theorem~II.10.1]{R69} and analytic continuation allows to represent the eigenvalues as real analytic functions $\beta_n\colon\RR\to\RR$ with $\beta_n(0)=\mu_n(0)=\alpha_n$. Similarly, there exists an orthogonal system of corresponding eigenfunctions given by analytic functions $\psi_n\colon\RR\to L^2(\Omega)$, $n\in\NN$.

  The curves $\beta_n$ in general cross and hence do not coincide with the family $\mu_n$ ordered by size. We show that these crossing points form a discrete set. First note that by the uniqueness theorem for analytic functions, two such curves have either at most finitely many points of intersections, or they coincide for all $\omega\in\RR$. Also, at any crossing point at $\beta_m(\omega_0)$, at most finitely many curves can meet. Indeed, we know that $\beta_m(\omega_0)$ has finite algebraic multiplicity, say $\ell$. The lower semi-continuity of the spectrum from \cite[Theorem IV.3.16]{KAT} shows that for $\omega$ in a neighbourhood of $\omega_0$, the eigenvalue $\beta_m(\omega_0)$ splits into at most $\ell$ eigenvalues. Hence at most $\ell$ curves can intersect $\beta_m$ at $\omega_0$. The argument also shows that such crossing points cannot accumulate at any point of $\beta_m$ and thus form a discrete set. Hence, for $\omega$ between any two crossing points, $\mu_n$ follows some $\beta_m$, with associated eigenfunctions $\psi_n$. Then $\phi_n = \psi_n / \|\psi_n\|_{L^2(\Omega)}$ is the required family of eigenfunctions satisfying $\phi_n \in L^\infty(\RR, L^2(\Omega))$.

  (ii) If we assume that $\alpha_1 := \mu_1(0)$ is algebraically simple, then according to (i) there exists $\delta>0$ such that we can choose $\omega\mapsto\phi_1(\omega)$ analytic as a function from $[-\delta,\delta]$ into $L^2(\Omega)$. In fact, Rellich's theorem even shows that $\phi_1$ is analytic in a complex neighbourhood $D$ of $\omega = 0$. Moreover, from the proof of Lemma~\ref{lem:supnorm} and Remark~\ref{rmk:complex-omega}, one sees that $\phi_1(\omega)\in C(\overline\Omega)$, and that $\|\phi_1(\omega)\|_{L^\infty(\Omega)}$ can be uniformly bounded in $D$. We will show that $\phi_1$ is even analytic from $D$ into $C(\overline\Omega)$, and to do so, we verify the conditions of Theorem 3.1 in \cite{AN00}.

  We take the Banach space $X = C(\overline\Omega)$, and we identify the space $W = L^2(\Omega)$ with its own dual. It follows that $X \hookrightarrow W \hookrightarrow X'$, and moreover $W$ is a \emph{separating} subset of $X'$ (see \cite[p.\ 787]{AN00}). Indeed, every $f \in X$ can be considered as an element of $W$, hence $\langle f,f\rangle = \|f\|^2_{L^2(\Omega)} \ne 0$ if $f \ne 0$. From the previous paragraph we have that $\phi_1\colon D \to X$ is locally bounded, and analytic from $D$ into $W$. It follows that $D \ni \omega \mapsto \langle \psi, \phi_1(\omega)\rangle \in \CC$ is analytic for all $\psi \in W$. Since $W$ separates $X$, \cite[Theorem 3.1]{AN00} is applicable and yields that $\phi_1\colon D \to X$ is analytic. In particular, $\omega\mapsto\phi_1(\omega)$ can be chosen to be a continuous function from $[-\delta,\delta]$ to $C(\overline\Omega)$.
\end{proof}

\begin{remark}
  We note that the proof of Proposition~\ref{prop:cont-phi-1} did not use the specific structure of the operators $-L_\omega$, and thus the results may be adapted to more general analytic families of operators.
\end{remark}

\subsection{Asymptotic behaviour on infinite cylinders}
All the ingredients needed for the main result are now in place.

\begin{proof}[Proof of Theorem~\ref{thm:BHE-cyl}]
  Taking the inverse Fourier transform of the Fourier series representation~\eqref{eq:eta-Fourier-series} of $\widehat{u}(t,\omega,y)$, the solution to~\eqref{eq:BHE-cyl} can be represented in the form
  \begin{equation*}
    u(t,x,y)
    = \frac{1}{\sqrt{2\pi}}\int_{\RR} \left[ \sum_{n=1}^\infty e^{-t \mu_n(\omega)} A_n(\omega) \phi_n(\omega, y) \right] e^{i \omega x} \,d\omega
  \end{equation*}
  which is well-defined due to Proposition~\ref{prop:eta-series}. Splitting off the first term, we obtain
  \begin{multline}
    \label{eq:u-split1}
    u(t,x,y)
    = \frac{1}{\sqrt{2\pi}}\int_{\RR} e^{-t \mu_1(\omega)} A_1(\omega) \phi_1(\omega, y) e^{i\omega x} \,d\omega \\
    + \frac{1}{\sqrt{2\pi}}\int_{\RR} \left[ \sum_{n=2}^\infty e^{-t \mu_n(\omega)} A_n(\omega) \phi_n(\omega, y) \right] e^{i \omega x} \,d\omega.
  \end{multline}
  We define
  \begin{equation}
    \label{eq:ct-cyl}
    c_t:=2\pi\left(\int_{\RR}e^{-t\mu_1(\omega)}\,d\omega\right)^{-1},
  \end{equation}
  which is well-defined by Lemma~\ref{lem:mu-omegas}. We prove that~\eqref{eq:BHE-cyl-blowup-limit} holds with this choice of blow-up factors.

  \paragraph{Step 1}
  Let us deal with the first term on the right hand side of~\eqref{eq:u-split1}. Define
  \begin{equation*}
    f(x,\omega,y):=\sqrt{2\pi}A_1(\omega)\phi_n(\omega,y)e^{i\omega\cdot x},
  \end{equation*}
  and let $\varphi_t$ be the approximate identity defined in Corollary~\ref{cor:approx-id-mu} below. Then
  \begin{multline*}
    \frac{c_t}{\sqrt{2\pi}}\int_{\RR} e^{-t \mu_1(\omega)} A_1(\omega) \phi_1(\omega, y) e^{i\omega x} \,d\omega\\
    =\sqrt{2\pi}\int_{\RR^N}\varphi_t(\omega)A_1(\omega) \phi_1(\omega, y) e^{i\omega x} \,d\omega
    =\bigl(\check\varphi_t*f(x\,,\cdot\,,y)\bigr)(0),
  \end{multline*}
  where, as in the proof of Theorem~\ref{thm:BHE-Rn}, $\check\varphi_t(\omega):=\varphi_t(-\omega)$ for all $\omega\in\RR$.  Proposition~\ref{prop:cont-phi-1} asserts that $\mu_1\in C(\RR)$, and that for any compact interval $I=[-r,r]$ there exists an interval $J = [-\delta,\delta]$ such that $\varphi\in C(J\times\overline{\Omega})$. Hence $f\colon I\times J\times\overline{\Omega} \to \CC$ is continuous and thus uniformly continuous. This means that $\omega\mapsto f(x,\omega,y)$ is continuous at $\omega=0$ uniformly with respect to $(x,y)\in I\times\overline{\Omega}$. It follows from Lemma~\ref{lem:approx-id} and Corollary~\ref{cor:approx-id-mu} that
  \begin{equation*}
    \lim_{t\to\infty}\frac{c_t}{\sqrt{2\pi}}\int_{\RR} e^{-t \mu_1(\omega)} A_1(\omega) \phi_1(\omega, y) e^{i\omega x} \,d\omega
    =\lim_{t\to\infty}\bigl(\check\varphi_t*f(x\,,\cdot\,,y)\bigr)(0)
    =f(x,0,y)
  \end{equation*}
  uniformly with respect to $(x,y)\in I\times\Omega$. By definition of $A_1(0)$, we finally see that
  \begin{multline*}
    f(x,0,y)
    =\sqrt{2\pi}A_1(0)\phi_1(0,y)
    =\sqrt{2\pi}\int_{\Omega}\hat u_0(0,\eta)e_1(\eta)\,d\eta\, e_1(y)\\
    =\int_{\RR}\int_{\Omega}u_0(\xi,\eta)e_1(\eta)\,d\eta\,d\xi\, e_1(y).
  \end{multline*}

  \paragraph{Step 2}
  We claim that the product of the second term on the right hand side of~\eqref{eq:u-split1} with $c_t$ converges to zero as $t \to \infty$ uniformly with respect to $(x,y) \in \RR \times   \Omega$. To do so, observe firstly that
  \begin{equation*}
    \left|A_n(\omega) \phi_n(\omega, y)\right| \leq |A_n(\omega)| \|\phi_n(\omega,\cdot)\|_\infty.
  \end{equation*}
  By~\eqref{eq:eta-Fourier-series-estimate-sup} in Proposition~\ref{prop:eta-series}, there exists $M>0$ such that $|A_n(\omega)|\leq M$ for all $\omega\in\RR$ and $n \in \NN$. Furthermore, by Lemma~\ref{lem:supnorm} and an elementary inequality, we find
  \begin{equation}
    \label{eq:phi-n-expand}
    \|\phi_n(\omega, \cdot) \|_{L^\infty(\Omega)}
    \leq C \left[ 1+(\alpha_n^{1/2} +\omega^2)^2 \right]^k
    \leq 3^{k}C\left[ 1+\alpha_n^{k} +\omega^{4k} \right]
  \end{equation}
  with $k>N/4$ and $C$ depending on $N$, $k$ and $\Omega$. We therefore have
  \begin{equation*}
    \Bigl|\int_{\RR} \Bigl[ \sum_{n=2}^\infty e^{-t \mu_n(\omega)} A_n(\omega) \phi_n(\omega, y) \Bigr] e^{i \omega x} \,d\omega\Bigr|
    \leq 3^{k}CM\int_{\RR}S(t, \omega) \,d\omega,
  \end{equation*}
  where
  \begin{equation*}
    S(t, \omega) := \sum_{n = 2}^\infty \left[ 1+\alpha_n^k +\omega^{4k} \right]e^{ -t\mu_n(\omega) }.
  \end{equation*}
  We show that
  \begin{equation}
    \label{eq:S-t-omega}
    c_t \int_\RR S(t, \omega) \,d\omega \longrightarrow 0
  \end{equation}
  as $t \to \infty$. Since $S(t, \omega)$ is a series with non-negative terms, we may interchange the summation and the integral to write
  \begin{equation}
    \label{eq:S-t-omega-2}
    \int_\RR S(t, \omega) \,d\omega
    = \sum_{n = 2}^\infty\int_\RR \left[1+\alpha_n^k+\omega^4\right] e^{-t \mu_n(\omega)} \,d\omega.
  \end{equation}
  We now multiply each term in~\eqref{eq:S-t-omega-2} by $c_t/2\pi$. Using that all integrands are even functions, we have a sum of expressions of the form
  \begin{equation*}
    \frac{ \int_\RR \omega^{2m} e^{-t \mu_n(\omega) } \,d\omega}{\int_\RR e^{-t\mu_1(\omega)} \,d\omega }
    = \frac{ \int_0^\infty \omega^{2m}e^{-t \mu_n(\omega) } \,d\omega}{\int_0^\infty e^{-t\mu_1(\omega)} \,d\omega }
  \end{equation*}
  with $m=0,2$. Applying Lemma~\ref{lem:mu-omegas} and Lemma~\ref{lem:f/g-lim} below, there exist $C,t_0\geq 1$ such that
  \begin{multline}
    \label{eq:key-ratio}
    \frac{ \int_\RR \omega^{2m} e^{-t \mu_n(\omega) } \,d\omega}{\int_\RR e^{-t\mu_1(\omega)} \,d\omega }
    \leq e^{-t(\alpha_n-\alpha_1)}\frac{ \int_0^\infty \omega^{2m} e^{-t\omega^4 }\,d\omega}{\int_0^\infty e^{-t(\beta\omega^2+\omega^4)}\,d\omega }\\
    \leq Ct^{\frac{1}{4}-\frac{1}{2m}}e^{-t(\alpha_n-\alpha_1)}
    \leq Ct^{1/4}e^{-t(\alpha_n-\alpha_1)}
  \end{multline}
  for all $t\geq t_0$ and $n\geq 2$, where $\beta:=2\alpha_1^{1/2}$. As $\alpha_n\geq\alpha_2>\alpha_1$ for all $n\in\NN$, an application of the exponential series yields
  \begin{equation*}
    e^{-t(\alpha_n-\alpha_1)}
    \leq\frac{\ell!}{(\alpha_n-\alpha_1)^\ell}\frac{1}{t^\ell}
    \leq\Bigl(\frac{\alpha_n}{\alpha_n-\alpha_1}\Bigr)^{\ell}\frac{\ell!}{\alpha_n^\ell}\frac{1}{t^\ell}
    \leq\Bigl(\frac{\alpha_2}{\alpha_2-\alpha_1}\Bigr)^{\ell}\frac{\ell!}{\alpha_n^\ell}\frac{1}{t^\ell}
  \end{equation*}
  for every $\ell\in\NN$, $n \ge 2$ and $t > 0$. Combining this with~\eqref{eq:S-t-omega-2} we see that
  \begin{equation}
    \label{eq:key-ratio-estimate}
    \frac{c_t}{2\pi} \int_\RR S(t, \omega) \,d\omega
    \leq C\Bigl(\frac{\alpha_2}{\alpha_2-\alpha_1}\Bigr)^{\ell}\frac{\ell!}{t^{\ell-1/4}}\sum_{n=2}^\infty \frac{2+\alpha_n^k}{\alpha_n^\ell}
  \end{equation}
  for all $t>t_0$. We choose $\ell\geq 1$ such that $\ell-k>N/4$. Then by Corollary~\ref{cor:eig-series} the series on the right hand side of~\eqref{eq:key-ratio-estimate} converges, and hence~\eqref{eq:S-t-omega} is valid as $t\to\infty$. As $S(t,\omega)$ is independent of $y\in\Omega$, this yields the convergence of the second term in~\eqref{eq:u-split1} to zero uniformly with respect to $y\in\Omega$ as $t\to\infty$. This completes the proof of the theorem.
\end{proof}
We conclude by proving a technical result used in the above proof.
\begin{lemma}
  \label{lem:f/g-lim}
  Suppose $n \geq 1$ is an integer and $\alpha\geq 0$. Furthermore let $p(x)$ be a polynomial of the form
  \begin{equation*}
    p(x) = \sum_{j=k}^m c_j x^{j}
  \end{equation*}
  with $m\geq k\geq 1$ and $c_m,c_k>0$. Define
  \begin{equation}
    f_\alpha(t) := \int_0^\infty x^\alpha e^{-tx^n} \,dx
    \quad\text{and}\quad
    g(t) := \int_0^\infty e^{-tp(x)} \,dx.
  \end{equation}
  Then there exist constants $C,t_0>0$ depending only on $\alpha$, $n$, $k$ and the coefficients of $p$ such that
  \begin{equation}
    \label{eq:f/g-bound}
    \frac{f_\alpha(t)}{g(t)}
    \leq Ct^{\frac{1}{k}-\frac{\alpha + 1}{n}}
  \end{equation}
  for all $t\geq t_0$.
\end{lemma}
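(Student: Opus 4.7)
The plan is to compute $f_\alpha$ explicitly and bound $g$ from below, then take the quotient. The strategy exploits the fact that the dominant term of $p(x)$ near $x=0$ is $c_k x^k$, which controls the $t \to \infty$ asymptotics of $g(t)$.

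First, I would evaluate $f_\alpha(t)$ exactly by the substitution $s = t x^n$, yielding
\begin{equation*}
  f_\alpha(t) = \frac{1}{n}\Gamma\!\left(\tfrac{\alpha+1}{n}\right) t^{-(\alpha+1)/n}.
\end{equation*}
This gives a clean upper bound of the form $C_1 t^{-(\alpha+1)/n}$, with $C_1$ depending only on $\alpha$ and $n$.

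Next, I would produce a matching lower bound for $g(t)$. Since $p(x)/x^k = c_k + c_{k+1}x + \cdots + c_m x^{m-k} \to c_k$ as $x \to 0^+$, one can pick $\delta > 0$ (depending only on the coefficients of $p$) such that $p(x) \leq 2 c_k x^k$ on $[0,\delta]$. Then restrict the domain of integration:
\begin{equation*}
  g(t) \;\geq\; \int_0^\delta e^{-t p(x)}\,dx \;\geq\; \int_0^\delta e^{-2c_k t x^k}\,dx.
\end{equation*}
Applying the substitution $s = 2c_k t x^k$ gives
\begin{equation*}
  g(t) \;\geq\; \frac{(2c_k)^{-1/k}}{k}\, t^{-1/k} \int_0^{2 c_k t \delta^k} s^{1/k - 1} e^{-s}\,ds,
\end{equation*}
and the latter integral converges to $\Gamma(1/k)$ as $t \to \infty$, so it is bounded below by, say, $\tfrac{1}{2}\Gamma(1/k)$ for $t \geq t_0$. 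Hence $g(t) \geq C_2 t^{-1/k}$ for $t \geq t_0$, where $C_2$ and $t_0$ depend only on $k$ and the coefficients of $p$.

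Combining the two estimates,
\begin{equation*}
  \frac{f_\alpha(t)}{g(t)} \;\leq\; \frac{C_1}{C_2}\, t^{\frac{1}{k} - \frac{\alpha+1}{n}} \qquad \text{for } t \geq t_0,
\end{equation*}
which is \eqref{eq:f/g-bound} with $C := C_1/C_2$. The only delicate step is selecting $\delta$ so that $p$ can be dominated by its leading low-order term $c_k x^k$ on $[0,\delta]$; this is elementary but must be done carefully since the constants $C,t_0$ are required to depend only on the data listed in the statement. Note that the dominance of $c_m x^m$ for large $x$ is irrelevant here, because the relevant asymptotic behaviour of $g(t)$ as $t \to \infty$ is governed entirely by a neighbourhood of the origin via Laplace's principle.
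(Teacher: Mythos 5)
Your proof is correct and follows essentially the same approach as the paper: compute $f_\alpha(t) = \tfrac{1}{n}\Gamma\bigl(\tfrac{\alpha+1}{n}\bigr)t^{-(\alpha+1)/n}$ exactly by the substitution $s=tx^n$, and bound $g(t)$ from below by restricting the integral to a neighbourhood of the origin on which $p(x)$ is dominated by a constant multiple of $x^k$, then apply the analogous substitution. Your domination $p(x)\leq 2c_k x^k$ on $[0,\delta]$ is in fact the correct direction of inequality needed here; the paper's printed proof appears to contain a sign typo at this step (it asserts $c_k + c_{k+1}x + \cdots + c_m x^{m-k} \geq \beta := c_k/2$, which would yield an upper bound on the truncated integral rather than the required lower bound on $g$), so your version is the cleaner one.
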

\begin{proof}
  Applying the substitution $s=tx^n$ in the definition of $f_\alpha(t)$ we have
  \begin{equation}
    \label{eq:f-gamma}
    f_\alpha(t)
    = \frac{1}{n}t^{-\frac{\alpha+1}{n}}\int_0^\infty s^{\frac{\alpha+1}{n}-1}e^{-s}\,ds
    = \frac{1}{n}\Gamma\left(\frac{\alpha+1}{n}\right)t^{-\frac{\alpha+1}{n}}.
  \end{equation}
  To deal with $g(t)$ we note that by assumption
  \begin{equation*}
    p(x)=x^k(c_k+c_{k+1}x+\cdots+c_mx^{m-k})
  \end{equation*}
  with $k\geq 1$ and $c_k>0$. Hence there exists $x_0>0$ such that $c_k+c_{k+1}x+\cdots+c_mx^{m-k}\geq \beta:=c_k/2$ for all $x\in[0,x_0]$. It follows that
  \begin{equation*}
    g(t) \geq \int_0^{x_0} e^{-tp(x^2)} \,dx
    \geq \int_0^{x_0}e^{-t\beta x^k}\,dx.
  \end{equation*}
  With the substitution $s=t\beta x_0^k$ we see that
  \begin{equation*}
    \int_0^{x_0}e^{-tp(x^2)} \,dx
    =\frac{1}{k\beta^{1/k}}t^{-1/k}
    \int_0^{t\beta x_0^k}s^{1/k-1}e^{-s}\,ds.
  \end{equation*}
  Now note that
  \begin{equation*}
    \lim_{t\to\infty}\int_0^{t\beta x_0^k}s^{1/k-1}e^{-s}\,ds
    =\Gamma\left(\frac{1}{k}\right)
  \end{equation*}
  and thus there exists $t_0\geq 0$ such that
  \begin{equation*}
    g(t)\geq \int_0^{t\beta x_0^k}s^{1/k-1}e^{-s}\,ds
    \geq\frac{1}{2k\beta^{1/k}}\Gamma\left(\frac{1}{k}\right)t^{-1/k}
  \end{equation*}
  for all $t\geq t_0$. Combining this with~\eqref{eq:f-gamma} we see that there exist constants $C, t_0>0$ depending only on $\alpha$, $n$, $k$ and the coefficients of $p$ such that~\eqref{eq:f/g-bound} holds for all $t\geq t_0$.
\end{proof}

\begin{corollary}
  \label{cor:approx-id-mu}
  Let $\mu_1(\omega)$ be as defined in Lemma~\ref{lem:mu-omegas}. Then
  \begin{equation*}
    \varphi_t(\omega) := \left(\int_{\RR} e^{-t \mu_1(\eta)} \,d\eta \right)^{-1}e^{-t\mu_1(\omega)}
  \end{equation*}
  defines an approximate identity for $t \to \infty$ in the sense of Definition~\ref{def:approx-id} (where we take $N=1$ in this definition).
\end{corollary}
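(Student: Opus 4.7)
Conditions (i) and (ii) of Definition~\ref{def:approx-id} are immediate from the definition of $\varphi_t$ and the positivity of the exponential. The substantive task is to verify condition (iii): for every $\delta > 0$, the ratio
\begin{equation*}
R_\delta(t) := \frac{\int_{|\omega|\geq\delta} e^{-t\mu_1(\omega)}\,d\omega}{\int_\RR e^{-t\mu_1(\eta)}\,d\eta}
\end{equation*}
must tend to $0$ as $t\to\infty$. My strategy is to bound the denominator from below and the numerator from above using the two-sided estimate on $\mu_1$ provided by Lemma~\ref{lem:mu-omegas}(ii), and then to extract an exponential gain from the strict monotonicity of $|\omega|\mapsto\mu_1(\omega)$ supplied by Lemma~\ref{lem:mu-omegas}(i).

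For the denominator, the upper estimate $\mu_1(\eta)\leq \alpha_1 + 2\alpha_1^{1/2}\eta^2 + \eta^4$ yields
\begin{equation*}
\int_\RR e^{-t\mu_1(\eta)}\,d\eta \;\geq\; 2\,e^{-t\alpha_1}\int_0^\infty e^{-t(\beta\eta^2 + \eta^4)}\,d\eta,
\end{equation*}
where $\beta := 2\alpha_1^{1/2}$. Applying the argument from the proof of Lemma~\ref{lem:f/g-lim} with $p(x)=\beta x^2 + x^4$ (so that $k=2$), the remaining integral is bounded below by $C_1 t^{-1/2}$ for all sufficiently large $t$.

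For the numerator, evenness reduces matters to $\int_\delta^\infty e^{-t\mu_1(\omega)}\,d\omega$. I split $t\mu_1(\omega) = \tfrac{t}{2}\mu_1(\omega) + \tfrac{t}{2}\mu_1(\omega)$, applying monotonicity to the first copy (so that $\mu_1(\omega)\geq \mu_1(\delta)$ whenever $\omega\geq\delta$) and the lower bound $\mu_1(\omega)\geq \alpha_1 + \omega^4$ to the second. This produces
\begin{equation*}
\int_\delta^\infty e^{-t\mu_1(\omega)}\,d\omega \;\leq\; e^{-(t/2)[\mu_1(\delta)+\alpha_1]}\int_0^\infty e^{-(t/2)\omega^4}\,d\omega \;\leq\; C_2\, t^{-1/4}\, e^{-(t/2)[\mu_1(\delta)+\alpha_1]}.
\end{equation*}

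Combining these two estimates yields a bound of the form $R_\delta(t) \leq C\, t^{1/4}\, e^{-(t/2)[\mu_1(\delta)-\alpha_1]}$, which tends to $0$ as $t\to\infty$. The whole argument hinges on the strict inequality $\mu_1(\delta) > \mu_1(0) = \alpha_1$, which is precisely the content of the strict monotonicity statement in Lemma~\ref{lem:mu-omegas}(i); once this is in hand the remaining estimates are routine Gaussian-type comparisons and no genuine obstacle arises.
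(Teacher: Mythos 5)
Your proof is correct and takes a genuinely different route from the paper. The paper replaces $e^{-t\mu_1(\omega)}$ throughout by the two-sided bounds $e^{-t(\alpha_1+\beta\omega^2+\omega^4)}\leq e^{-t\mu_1(\omega)}\leq e^{-t(\alpha_1+\omega^4)}$ of Lemma~\ref{lem:mu-omegas}(ii), cancels the $e^{-t\alpha_1}$ factors in the ratio, and then handles the truncated integral $\int_\delta^\infty e^{-t\omega^4}\,d\omega$ by inserting the elementary bound $1\leq\delta^{-2}\omega^2$ on $[\delta,\infty)$ so that Lemma~\ref{lem:f/g-lim} applies directly; this gives a polynomial rate $J_\delta(t)\leq Ct^{-1/4}$. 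You instead split $e^{-t\mu_1(\omega)}=e^{-(t/2)\mu_1(\omega)}\,e^{-(t/2)\mu_1(\omega)}$, apply the monotonicity of $\omega\mapsto\mu_1(\omega)$ on $[\delta,\infty)$ to one factor and the lower bound $\mu_1(\omega)\geq\alpha_1+\omega^4$ to the other, which yields the stronger exponential rate $R_\delta(t)\leq Ct^{1/4}e^{-(t/2)[\mu_1(\delta)-\alpha_1]}$. The trade-off: the paper's trick stays entirely within the polynomial estimates of Lemma~\ref{lem:mu-omegas}(ii) and applies Lemma~\ref{lem:f/g-lim} as a black box, while your argument makes explicit use of the monotonicity of $\mu_1$ (though in fact the lower bound $\mu_1(\delta)\geq\alpha_1+\delta^4>\alpha_1$ already suffices, so strict monotonicity is not truly needed) and reaches into the internals of the proof of Lemma~\ref{lem:f/g-lim} rather than its statement. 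Your exponential rate is a genuine improvement over the paper's $t^{-1/4}$, though the qualitative conclusion is the same and the weaker rate is all that is needed for Theorem~\ref{thm:BHE-cyl}.
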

\begin{proof}
  By definition of $\varphi_t(\omega)$, it is obvious that $\varphi_t(\omega) \geq 0$ for all $t>0$ and $\omega \in \RR$, and that $\int_\RR \varphi_t \,d\omega = 1$. It remains to show that for every $\delta>0$
  \begin{equation}
    \label{eq:approx-id-mu}
    J_\delta(t):=\int_{|\omega| \ge \delta} \varphi_t(\omega) \,d \omega \longrightarrow 0 \qquad \text{as } t \to \infty.
  \end{equation}
  From Lemma~\ref{lem:mu-omegas}(i) we obtain
  \begin{equation*}
    e^{-t(\alpha_1 + \beta \omega^2 + \omega^4)} \leq e^{-t \mu_1(\omega)} \leq e^{-t(\alpha_1 + \omega^4)}
  \end{equation*}
  for all $\omega \in \RR$, where $\beta := 2 \alpha_1^{1/2}$. Since the function $\mu_1$ is even, it suffices to consider $\omega \ge 0$. The above inequalities imply
  \begin{equation*}
    J_\delta(t)
    = \frac{ \int_\delta^\infty e^{-t \mu_1(\omega)}\,d \omega }{ \int_0^\infty e^{-t \mu_1(\omega)} \,d \omega }
    \leq \frac{ \int_\delta^\infty e^{-t(\alpha_1 + \omega^4)} \,d\omega}
    { \int_0^\infty e^{-t(\alpha_1 + \beta \omega^2 + \omega^4)} \,d\omega }
    = \frac{ \int_\delta^\infty e^{-t \omega^4} \,d\omega }
    { \int_0^\infty e^{-t(\beta \omega^2 + \omega^4)} \,d\omega }.
  \end{equation*}
  As $e^{-t \omega^4} \leq \delta^{-2} \omega^2 e^{-t\omega^4}$ holds for all $\omega \in [\delta, \infty)$ and all $t>0$ we deduce that
  \begin{equation*}
    J_\delta(t)
    \leq \frac{1}{\delta^2}
    \frac{ \int_\delta^\infty \omega^2 e^{-t \omega^4} \,d\omega }
    { \int_0^\infty e^{-t(\beta \omega^2 + \omega^4)} \,d\omega }
    \leq \frac{1}{\delta^2}
    \frac{ \int_0^\infty \omega^2 e^{-t \omega^4} \,d\omega }
    { \int_0^\infty e^{-t(\beta \omega^2 + \omega^4)} \,d\omega }.
  \end{equation*}
  By Lemma~\ref{lem:f/g-lim}, there exists $C,t_0>0$ such that $J_\delta(t)\leq Ct^{\frac{1}{2}-\frac{3}{4}}=Ct^{-1/4}$ for all $t>t_0$. Hence $J_\delta(t)\to 0$ as $t\to\infty$ as required.
\end{proof}

\begin{remark}
  Observe that the decay rate $t^{-1/4}$ obtained in the above corollary is consistent with the $L^\infty$ decay of the solution on the full space---use~\eqref{eq:PHE-Rn-ct} with $\alpha = 2$ and $N=1$. In addition, the \emph{spectral gap} $\alpha_2 - \alpha_1 > 0$ of the biharmonic operator with clamped boundary conditions on the cross-section domain $\Omega$ appears explicitly in the uniform bound~\eqref{eq:key-ratio-estimate}. These features clearly show how the behaviour of the solution on the infinite cylinder is influenced by properties of the biharmonic operators respectively on the real line and on the bounded domain.
\end{remark}

\end{document}